
\documentclass[12pt]{article}
\usepackage{amsthm}
\usepackage{amsmath, amssymb,enumerate}
\usepackage{graphicx}
\usepackage{amsfonts}
\usepackage{hyperref}
\usepackage{tikz}

\newtheorem{theorem}{Theorem}[section]
\newtheorem{corollary}[theorem]{Corollary}

\newtheorem{lemma}[theorem]{Lemma}
\newtheorem{proposition}[theorem]{Proposition}

\theoremstyle{definition}

\newtheorem{example}[theorem]{Example}

\def\qqq{\mathbb{Q}}

\def\zzz{\mathbb{Z}}

\def\K{\mathcal{K}}

\def\R{R}
\def\S{S}

\DeclareMathOperator{\Diag}{Diag}
\DeclareMathOperator{\divisor}{div}
\DeclareMathOperator{\Div}{Div}
\DeclareMathOperator{\Ram}{Ram}
\DeclareMathOperator{\Prin}{Prin}
\DeclareMathOperator{\Star}{Star}
\DeclareMathOperator{\Jac}{Jac}
\usepackage{authblk}
\begin{document}

\author[1]{Kassie Archer}
\author[1]{Caroline Melles}
\affil[1]{\small{Department of Mathematics, U.~S.~Naval Academy, Annapolis, MD, 21402, USA 

Email: karcher@usna.edu, cgg@usna.edu}}
\title{Harmonic Morphisms of Arithmetical Structures on Graphs}
\date{}
\maketitle

\begin{abstract}
Let $\phi \colon \Gamma_2 \rightarrow \Gamma_1$ be a 
harmonic morphism of connected graphs. We show that an arithmetical 
structure on $\Gamma_1$ can be pulled back via $\phi$ to an 
arithmetical structure on $\Gamma_2$. 
We then show that some results of Baker and Norine 
on the critical groups for the usual Laplacian 
extend to arithmetical critical groups, which are abelian groups determined by the generalized Laplacian associated to these arithmetical structures.
In particular, we show that the morphism $\phi$ induces a surjective group 
homomorphism from the arithmetical critical group of $\Gamma_2$
to that of $\Gamma_1$ and an injective group homomorphism 
from the arithmetical critical group of $\Gamma_1$ to 
that of $\Gamma_2$. Finally, we prove a Riemann-Hurwitz formula 
for arithmetical structures.
\end{abstract}

\noindent \textbf{Keywords:} harmonic morphisms, arithmetical structures, critical groups, graph Jacobian, divisors

\noindent \textbf{Mathematics Subject Classifications:} 05C25, 05C50

\section{Introduction}

Given two finite graphs $\Gamma_1$ and $\Gamma_2$, a harmonic morphism $\phi
\colon \Gamma_2\to\Gamma_1$ is a morphism that preserves locally harmonic 
functions on the vertices, as originally defined by Urakawa \cite{Ur00}. 
Baker and Norine \cite{BN09} argued that these morphisms are an appropriate discrete analogue of holomorphic maps between Riemann surfaces and justified this with results analogous to those from algebraic geometry. They 
showed that a harmonic morphism $\phi$ induces certain homomorphisms 
 between the Jacobians (or critical groups) of the graphs. 
 In particular, they showed that the pushforward  
 $\phi_* \colon \Jac(\Gamma_2)\to\Jac(\Gamma_1)$
 is a surjective homomorphism, and the pullback 
 $\phi^* \colon \Jac(\Gamma_1)\to\Jac(\Gamma_2)$ is an injective one. 
 
 Baker and Norine also proved a Riemann-Hurwitz formula for graphs. This establishes a relationship 
 between the genus (or first Betti number) 
 $g_1$ of $\Gamma_1$ and the genus $g_2$ of $\Gamma_2$, namely that 
 \[ 2g_2 -2 = \deg(\phi)(2g_1-2) + \sum_{v\in V(\Gamma_2)} (2\mu(v)-2+\nu(v))\]
 where $\deg(\phi)$ is the degree of the harmonic morphism, 
 and $\mu(v)$ and $\nu(v)$ are the horizontal and vertical multiplicities at a vertex $v\in V(\Gamma_2)$ (see Section~\ref{sec:harmonic}).

The aim of this paper is to extend these results of Baker and Norine about harmonic morphisms between graphs to the case of harmonic morphisms between so-called arithmetical structures on graphs. 
Arithmetical structures on finite graphs were originally introduced by Lorenzini in 1989 \cite{Lo89}, and are described in this paper in Section~\ref{sec:arith structures}.
An arithmetical structure on a graph $\Gamma$ determines an arithmetical Laplacian, which is a generalization of
the standard graph Laplacian.
This arithmetical Laplacian can be used to compute the arithmetical critical group. Lorenzini's motivation behind defining these structures came from these arithmetical Laplacian matrices, which originally appeared as the intersection matrices for certain degenerating curves in algebraic geometry. The corresponding arithmetical critical group is known as the group of components of the N\'{e}ron model in algebraic geometry (see \cite{Lo90}).

In Section~\ref{sec:main}, we show that one can pull back an arithmetical structure on $\Gamma_1$ to an arithmetical structure on $\Gamma_2$ via a harmonic morphism 
$\phi \colon \Gamma_2\to\Gamma_1$. This harmonic morphism then induces homomorphisms between their respective arithmetical critical groups, giving results similar to those of Baker and Norine. In Section~\ref{sec:RH}, we describe the arithmetical genus of a graph and prove a Riemann-Hurwitz formula for arithmetical structures on graphs.

\section{Background and Notation}

In this paper, we consider connected simple graphs on $n\geq 2$ vertices with
 no loops, and denote such a graph with $\Gamma=(V,E)$, 
 where $V=V(\Gamma)$ is the 
 vertex set of $\Gamma$ and $E=E(\Gamma)$ is the non-empty edge set of $\Gamma$. 
If $v$ and $w$ are adjacent vertices in $\Gamma$ (i.e., if $(v,w)\in E$), we write $v \sim w$.

Let $\Gamma_2 = (V_2, E_2)$ and $\Gamma_1=(V_1,E_1)$ be 
connected simple graphs. A {\it graph morphism} $\phi \colon 
\Gamma_2 \rightarrow \Gamma_1$ is a pair of maps 
$V_2 \rightarrow V_1$ and $E_2 \rightarrow V_1 \cup E_1$ 
such that if $e=(v,w)$ is an edge in $\Gamma_2$, then either:
\begin{enumerate}[(i)]
\item 
$\phi(v) = \phi(w)$ and $\phi(e) = \phi(v) = \phi(w)$, or 
\item  
$\phi(v)$ and $\phi(w)$ are adjacent in $\Gamma_1$, and 
$\phi(e)$ is 
the edge $(\phi(v), \phi(w))$ in $\Gamma_1$.
\end{enumerate}
In case (i), the edge $e\in E_2$ is mapped to a vertex in $V_1$, and we say that the edge $e$ is {\it vertical for $\phi$}, and 
in case (ii), the edge $e \in E_2$ maps to another edge in $E_1$, and we say that the edge $e$ is {\it horizontal for $\phi$}.

\subsection{Harmonic morphisms}\label{sec:harmonic}

Let $\Gamma_1 = (V_1, E_1)$ and $\Gamma_2 = (V_2, E_2)$ 
be connected graphs with non-empty edge sets and with $n_1=|V_1|$ and $n_2=|V_2|$. 
Let $\phi \colon \Gamma_2 \rightarrow \Gamma_1$ 
be a graph morphism.
If $v$ is a vertex in $\Gamma_2$, the {\it vertical multiplicity} 
of $\phi$ at $v$ is defined to be the number of vertical edges 
incident to $v$, and is denoted $\nu_\phi(v)$ or simply $\nu(v)$.
If $v$ is a vertex in $\Gamma_2$ and $f$ is an edge in $\Gamma_1$ 
that is incident to $\phi(v)$, 
the {\it local horizontal multiplicity} $\mu_\phi(v, f)$ 
is defined to be the  number of edges of $\Gamma_2$ 
that are incident to $v$ and map to $f$.

We say that $\phi$ is {\it harmonic at $v$} if 
$\mu_\phi(v,f)$ takes the same value for all edges $f$ 
in $\Gamma_1$ that are incident to $\phi(v)$.
We say that $\phi$ is a {\it harmonic morphism} if 
$\phi$ is harmonic at every vertex in $\Gamma_2$.
In this case, the {\it horizontal multiplicity of $\phi$ at $v$}
is defined to be the value of $\mu_\phi(v,f)$ for any 
edge $f$ incident to $\phi(v)$, and is denoted 
$\mu_\phi(v)$, or simply $\mu(v)$. 

A constant (or trivial) morphism, mapping $\Gamma_2$ 
to a single vertex of $\Gamma_1$, is always harmonic.
Unless otherwise stated, we will assume that all graph morphisms are 
non-constant (i.e., non-trivial).

\begin{example}
\label{ex:C3-W5-part1}
Let $\Gamma_1 = C_3$ be the 3-cycle graph with vertex set $V_1 = \{x_0, x_1, x_2\}$ 
and edge set $E_1 = \{(x_0, x_1), (x_1, x_2), (x_2, x_0)\}$. 
Let $\Gamma_2$ be the wheel graph $W_5$ on 5 vertices, with 
$V_2 = \{ v_0,  v_1,v_2,v_3 , v_4\}$ where $v_0$ is the central vertex, 
so that the edge set is \[E_2=\{ (v_0, v_1), (v_0, v_2),(v_0, v_3), (v_0, v_4), 
(v_1, v_2), (v_2, v_3), (v_3, v_4), (v_4, v_1)\}.\]
Let $\phi \colon \Gamma_2 \rightarrow \Gamma_1$ 
be the graph morphism determined by the following rule.
\begin{center}
\begin{tabular}{r|r|r|r|r|r}
$v_i$ & $v_0$ & $v_1$& $v_2$& $v_3$& $v_4$\\
\hline 
$\phi(v_i)$ & $x_0$& $x_1$& $x_1$& $x_2$& $x_2$
\end{tabular}
\end{center}

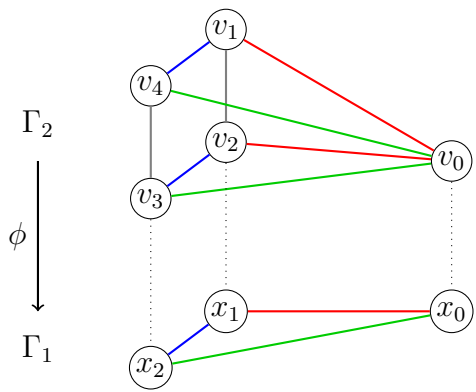
\begin{figure}[h]
\begin{minipage}{\textwidth}
\begin{center}
\begin{tikzpicture}
\node[circle, draw,inner sep=1pt] (x1) at (0,0) {$x_0$};
\node[circle, draw,inner sep=1pt] (x2) at (-3,0) {$x_1$};
\node[circle, draw,inner sep=1pt] (x3) at (-4,-.75) {$x_2$};

\node[circle, draw,inner sep=1pt] (v1) at (0,2) {$v_0$};
\node[circle, draw,inner sep=1pt] (v2) at (-3,3.75) {$v_1$};
\node[circle, draw,inner sep=1pt] (v3) at (-3,2.25) {$v_2$};
\node[circle, draw,inner sep=1pt] (v5) at (-4,3) {$v_4$};
\node[circle, draw,inner sep=1pt] (v4) at (-4,1.5) {$v_3$};

 \draw [thick, red] (x1) -- (x2);
 \draw[thick, blue] (x2)--(x3);
 \draw [thick, green!80!black] (x3)--(x1);
  \draw  [thick, red](v1)--(v2);
  \draw  [thick, gray] (v2)-- (v3);
  \draw  [thick, blue] (v3)--(v4);
  \draw [thick, blue] (v5)--(v2);
  \draw  [thick, red] (v3)--(v1);
  \draw [thick, green!80!black] (v1)--(v4);
  \draw  [thick, green!80!black] (v1)--(v5);
  \draw [thick, gray] (v4)--(v5);
  
  \draw[dotted] (v4)--(x3);
  \draw[dotted] (v3)--(x2);
  \draw[dotted] (v1)--(x1);
  
  \node at (-5.5, 2.5) {$\Gamma_2$};
    \node at (-5.5, -.5) {$\Gamma_1$};
    \draw [thick,->] (-5.5,2)-- node[left] {$\phi$} ++ (0,-2) ;
  
\end{tikzpicture}
\end{center}
\end{minipage}
\caption{A harmonic morphism $\phi$ from $\Gamma_2=W_5$ to $\Gamma_2=C_3$.}
\label{fig:harmonic}
\end{figure}

The graph morphism $\phi$, illustrated in Figure~\ref{fig:harmonic}, is harmonic. 
Notice that the edges $(v_1,v_2)$ and $(v_3,v_4)$ in $\Gamma_2$ are vertical since 
$\phi(v_1) = \phi(v_2) = x_1$ and $\phi(v_3) = \phi(v_4) = x_2$, while all other edges are horizontal.
The vertex $v_0$ has horizontal multiplicity 2 and vertical 
multiplicity 0, and all other 
vertices of $\Gamma_2$ have horizontal multiplicity 1 and vertical 
multiplicity 1.
\end{example}


Harmonic morphisms also have a nice description in terms of 
matrices. First fix orderings 
of the vertices of $\Gamma_1$ 
and 
of the vertices of $\Gamma_2$, and let $A_1$ and $A_2$ be the adjacency matrices
of $\Gamma_1$ and $\Gamma_2$ with respect 
to these orderings. 
The graph morphism $\phi$ can be described by an $n_2\times n_1$ zero-one 
vertex map matrix $\Phi$, with rows indexed by the vertices of $\Gamma_2$ 
and columns indexed by the vertices of $\Gamma_1$, such that 
\[
\Phi_{v,x} = 
\begin{cases} 
1 &\quad \text{if $\phi(v)=x$,}\\
0 &\quad \text{otherwise.}
\end{cases}
\]

We define a vector $\boldsymbol \mu$ of horizontal multiplicities 
and a vector $\boldsymbol \nu$ of vertical multiplicities of the 
vertices of $\Gamma$.
The horizontal multiplicity matrix $D_\mu=\Diag(\boldsymbol \mu)$
 is the diagonal matrix whose diagonal elements are the horizontal 
 multiplicities of vertices of $\Gamma_2$. 
 Similarly, the vertical multiplicity matrix $D_\nu=\Diag(\boldsymbol \nu)$
 is the diagonal matrix whose diagonal elements are the vertical multiplicities.

\begin{proposition}
(Melles-Joyner \cite[Theorem~1]{MJ24})
\label{propn:harmonic-adjacency-criterion}
If $\phi \colon \Gamma_2 \rightarrow \Gamma_1$ is a 
non-constant harmonic morphism of connected graphs, then
\[
A_2 \Phi = D_\nu \Phi + D_\mu \Phi A_1.
\]
\end{proposition}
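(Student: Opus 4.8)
The plan is to verify the matrix identity entrywise, comparing the $(v,x)$ entry of each side for an arbitrary pair $v \in V_2$ and $x \in V_1$. On the left, expanding the matrix product gives $(A_2\Phi)_{v,x} = \sum_{w \in V_2}(A_2)_{v,w}\Phi_{w,x}$, which counts the number of neighbors $w$ of $v$ in $\Gamma_2$ with $\phi(w)=x$. On the right, since $D_\nu$ and $D_\mu$ are diagonal and row $v$ of $\Phi$ has its single nonzero entry in column $\phi(v)$, I would compute $(D_\nu\Phi)_{v,x}=\nu(v)\,\Phi_{v,x}$ and $(D_\mu\Phi A_1)_{v,x}=\mu(v)\,(A_1)_{\phi(v),x}$. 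Thus the right-hand entry equals $\nu(v)$ when $x=\phi(v)$, equals $\mu(v)$ when $x\sim\phi(v)$ in $\Gamma_1$, and equals $0$ otherwise; here I use that $\Gamma_1$ has no loops, so these three cases are mutually exclusive and exhaustive.

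The heart of the argument is then a case analysis matching these values against the left-hand count. When $x=\phi(v)$, a neighbor $w$ of $v$ with $\phi(w)=\phi(v)$ is exactly the endpoint of a vertical edge at $v$, so the count is $\nu(v)$ by the definition of vertical multiplicity. When $x\sim\phi(v)$, any neighbor $w$ with $\phi(w)=x\neq\phi(v)$ is joined to $v$ by a horizontal edge mapping to the edge $f=(\phi(v),x)$ of $\Gamma_1$, so the count is the local horizontal multiplicity $\mu_\phi(v,f)$; here I invoke the hypothesis that $\phi$ is harmonic at $v$ to conclude this equals $\mu(v)$, independently of which edge $f$ is chosen. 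Finally, when $x\neq\phi(v)$ and $x\not\sim\phi(v)$, a neighbor $w$ with $\phi(w)=x$ would give a horizontal edge $(v,w)$ whose image would force $\phi(v)$ and $x$ to be adjacent in $\Gamma_1$ by the definition of graph morphism, a contradiction; hence the count is $0$.

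Since the two sides agree in every entry and in every case, the identity follows. I expect the computation itself to be routine; the only subtle points, and the steps I would be most careful about, are the second case, where harmonicity is precisely the property needed to replace the local multiplicity $\mu_\phi(v,f)$ by the global value $\mu(v)$, and the third case, where the structural constraint on graph morphisms (that horizontal edges map to edges between the images of their endpoints) is what rules out spurious contributions to the left-hand count.
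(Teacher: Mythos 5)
Your proof is correct and complete: the entrywise computation of both sides, together with the three-way case analysis on the relationship between $x$ and $\phi(v)$, is exactly the right way to verify the identity, and you correctly isolate where harmonicity and the definition of a graph morphism are used. Note that the paper itself does not prove this proposition but cites it from Melles--Joyner \cite[Theorem~1]{MJ24}, so there is no in-paper argument to compare against; your direct verification matches the standard proof of that result.
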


In fact, the identity in Proposition \ref{propn:harmonic-adjacency-criterion}
characterizes harmonic morphisms (see \cite{MJ24}), but we will not need this fact here.
See also \cite[Section~3.3.2]{JM17}
for an identity relating the incidence
matrices and an identity relating the 
Laplacian matrices of a harmonic morphism.

Urakawa \cite{Ur00} proved that under a non-constant harmonic 
morphism, the number of preimages of an 
edge of $\Gamma_1$ is the same for all edges in $\Gamma_1$. 
This number is called the {\it degree} of $\phi$ and denoted 
$\deg(\phi)$.

\begin{example}\label{ex:C3-W5-part1.5}
Let $\phi \colon \Gamma_2 \rightarrow \Gamma_1$ 
be the harmonic morphism 
described in Example \ref{ex:C3-W5-part1}, 
mapping the wheel graph on 5 vertices to the 3-cycle graph.
The adjacency matrices of $\Gamma_1$ and $\Gamma_2$ 
and the vertex map matrix are
\[
A_1 = 
\left(
\begin{array}{rrr}
0&1&1\\ 1&0&1 \\ 1&1&0
\end{array} \right), \,
A_2 = 
\left(
\begin{array}{rrrrr}
0&1&1&1&1\\ 1&0&1&0&1\\
1&1&0&1&0 \\ 1&0&1&0&1 \\ 1& 1&0&1&0
\end{array} \right),
\, \text{and} \,
\Phi = 
\left( \begin{array}{rrr}
1&0&0 \\ 0&1&0 \\ 0&1&0 \\ 0&0&1 \\ 0&0&1
\end{array} \right).
\]
The horizontal and vertical multiplicity matrices are 
$D_\mu = \Diag(\boldsymbol \mu)$ and $D_\nu = \Diag(\nu)$, 
where $\boldsymbol \mu = (2,1,1,1,1)^t$, and 
$\boldsymbol \nu =(0,1,1,1,1)^t$.
The morphism $\phi$ is harmonic and thus satisfies 
$A_2 \Phi = D_\nu \Phi + D_\mu \Phi A_1$. 
In this case, the degree of $\phi$ is 2 since each edge in $\Gamma_1$ has two preimages in $\Gamma_2$.
\end{example}


The following lemma is an immediate consequence of the fact that the number of preimages of an edge is the same for each edge in $\Gamma_1$.  

\begin{lemma}
\label{lemma:surjective-harmonic-morphism}
A non-constant harmonic morphism is surjective on 
vertices and edges.
\end{lemma}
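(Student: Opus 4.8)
The plan is to prove the statement by using the defining property of degree, namely that every edge of $\Gamma_1$ has exactly $\deg(\phi)$ preimages, together with the non-constancy and connectedness of the graphs. The key point is that Urakawa's degree theorem guarantees a \emph{uniform} positive count of edge-preimages.

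First I would establish surjectivity on edges. Since $\phi$ is non-constant, there is at least one horizontal edge, so $\deg(\phi) \geq 1$. By Urakawa's result (quoted just before this lemma), every edge $f \in E_1$ has exactly $\deg(\phi)$ horizontal preimages in $E_2$. Because $\deg(\phi) \geq 1$, this count is strictly positive for \emph{every} edge $f$, so each $f \in E_1$ is the image of at least one edge of $\Gamma_2$. This gives surjectivity on edges immediately.

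Next I would deduce surjectivity on vertices from surjectivity on edges. Let $x \in V_1$ be arbitrary. Since $\Gamma_1$ is connected and has at least two vertices (recall $n \geq 2$ throughout), $x$ is incident to some edge $f = (x, y) \in E_1$. By the edge-surjectivity just proved, $f$ has a preimage edge $e = (v, w) \in E_2$, and by the definition of a graph morphism this edge is horizontal with $\{\phi(v), \phi(w)\} = \{x, y\}$. Hence one of $v, w$ maps to $x$, so $x$ lies in the image of the vertex map. As $x$ was arbitrary, $\phi$ is surjective on vertices.

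The main obstacle, such as it is, is simply making sure the hypotheses are invoked correctly: the argument is essentially immediate once Urakawa's uniform-degree theorem is in hand, but it does rely on three facts working together — non-constancy (to force $\deg(\phi) \geq 1$ rather than $0$), the uniform count (so that \emph{no} edge has zero preimages), and connectedness of $\Gamma_1$ with $n_1 \geq 2$ (so that every vertex touches an edge and therefore inherits surjectivity from the edge case). I would be careful to note that without connectedness one could have an isolated vertex in $\Gamma_1$ outside the image, which is why that standing assumption is what makes the vertex statement follow.
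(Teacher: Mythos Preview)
Your proof is correct and follows exactly the approach the paper indicates: the paper states (without further detail) that the lemma is ``an immediate consequence of the fact that the number of preimages of an edge is the same for each edge in $\Gamma_1$,'' and you have simply spelled out that consequence, first for edges and then for vertices via connectedness. The only step you leave slightly implicit is why non-constancy forces at least one horizontal edge (hence $\deg(\phi)\geq 1$), but this follows at once from connectedness of $\Gamma_2$.
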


Furthermore, Baker and Norine \cite{BN09} showed 
that for each vertex $x$ in $\Gamma_1$, the sum of 
the horizontal multiplicities of the vertices in $\phi^{-1}(x)$ is equal 
to the degree of $\phi$. This fact is expressed in matrix 
form in the following lemma.

\begin{lemma}
\label{lemma:degree-harmonic-morphism}
If $\phi$ is a non-constant harmonic morphism, then 
\[
\Phi^t D_\mu \Phi = \deg(\phi) I,
\]
where $I$ is the identity matrix of size $n_1 \times n_1$.
\end{lemma}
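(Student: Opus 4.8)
The plan is to prove the matrix identity $\Phi^t D_\mu \Phi = \deg(\phi) I$ by computing the $(x,y)$ entry of the left-hand side for vertices $x,y \in V_1$ and matching it against the entry of $\deg(\phi) I$. Since $\Phi$ is the zero-one vertex map matrix, the matrix product unwinds combinatorially: for vertices $x, y$ of $\Gamma_1$,
\[
(\Phi^t D_\mu \Phi)_{x,y} = \sum_{v \in V_2} \Phi_{v,x}\, \mu(v)\, \Phi_{v,y} = \sum_{\substack{v \in V_2 \\ \phi(v)=x,\ \phi(v)=y}} \mu(v).
\]
When $x \neq y$ this sum is empty (no vertex can map to two distinct vertices), so the off-diagonal entries vanish and the result is automatically diagonal. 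The entire content of the lemma therefore reduces to the diagonal entries: I must show that $\sum_{v \in \phi^{-1}(x)} \mu(v) = \deg(\phi)$ for every $x \in V_1$.

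For the diagonal case I would invoke the fact stated immediately before the lemma, namely Baker and Norine's result that for each vertex $x$ the sum of horizontal multiplicities over $\phi^{-1}(x)$ equals $\deg(\phi)$. Since the excerpt states this fact as already known, the lemma is really just its reformulation in matrix language, and the above computation of $(\Phi^t D_\mu \Phi)_{x,x}$ shows exactly that this diagonal entry equals $\sum_{v \in \phi^{-1}(x)} \mu(v)$. Equating this with $\deg(\phi)$ completes the identification with $\deg(\phi) I$.

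If instead I wanted a self-contained argument rather than citing the Baker-Norine fact directly, I would derive the diagonal identity by counting edge preimages. Fix an edge $f = (x, x')$ of $\Gamma_1$ incident to $x$. By definition of harmonicity, each $v \in \phi^{-1}(x)$ contributes exactly $\mu(v)$ edges of $\Gamma_2$ mapping to $f$, so the total number of preimages of $f$ is $\sum_{v \in \phi^{-1}(x)} \mu(v)$. On the other hand, Urakawa's theorem says this preimage count equals $\deg(\phi)$, independent of which edge $f$ we chose. Combining these gives the diagonal value, provided $x$ has at least one incident edge, which holds because $\Gamma_1$ is connected with $n_1 \geq 2$ and $\phi$ is surjective by Lemma~\ref{lemma:surjective-harmonic-morphism}.

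The main subtlety — really the only place requiring care — is the consistency of the count: the sum $\sum_{v \in \phi^{-1}(x)} \mu(v)$ is defined using whatever edge structure sits at $x$, and I must confirm it is genuinely independent of the chosen incident edge $f$. This is precisely what harmonicity (making $\mu(v)$ well-defined at each $v$) together with Urakawa's degree theorem guarantees, so no new work is needed beyond carefully assembling these ingredients. The matrix bookkeeping is otherwise routine, and the off-diagonal vanishing is immediate from the zero-one structure of $\Phi$.
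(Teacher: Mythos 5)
Your proof is correct and matches the paper's treatment: the paper states this lemma without proof, presenting it as the matrix reformulation of the Baker--Norine fact quoted just before it, and your entrywise computation of $(\Phi^t D_\mu \Phi)_{x,y}$ together with the citation of that fact is exactly the intended (omitted) argument. Your optional self-contained derivation of the diagonal identity from Urakawa's edge-preimage theorem is also sound and goes slightly beyond what the paper records, but it does not change the overall approach.
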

 
 We conclude this section with a technical linear algebra 
 lemma which will be used in the proof of 
 Theorem \ref{thm:harmonic-L-identity}.

 \begin{lemma}
 \label{lemma:diagonal-technical}
  Let $\Phi$ be an $n_2 \times n_1$ zero-one matrix 
 with the property that each row contains exactly one 1, 
and let $\boldsymbol c = 
 (c_1 , \dots , c_{n_1})^t$ be a length $n_1$ vector of real numbers.
  The diagonal matrices $\Diag (\Phi \boldsymbol c)$ 
 and $\Diag(\boldsymbol c)$ are related by the identity
 \[
 \Diag(\Phi \boldsymbol c) \Phi = \Phi \Diag(\boldsymbol c).
 \]
 \end{lemma}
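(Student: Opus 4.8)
The plan is to verify the identity entrywise, comparing the $(i,j)$ entry of the two sides. The hypothesis that every row of $\Phi$ contains exactly one $1$ is precisely what makes the computation collapse (indeed it says $\Phi$ is the zero-one matrix of a function on indices), so I would begin by recording its consequence: for each row index $i \in \{1, \dots, n_2\}$ there is a unique column index $k(i)$ with $\Phi_{i,k(i)} = 1$ and $\Phi_{i,j} = 0$ for $j \neq k(i)$. Feeding this into the matrix–vector product gives $(\Phi \boldsymbol c)_i = \sum_{k} \Phi_{i,k} c_k = c_{k(i)}$, so $\Diag(\Phi \boldsymbol c)$ is the $n_2 \times n_2$ diagonal matrix whose $i$th diagonal entry is $c_{k(i)}$.

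Next I would expand both products using the fact that left-multiplication by a diagonal matrix scales rows while right-multiplication scales columns. The $(i,j)$ entry of $\Diag(\Phi \boldsymbol c)\,\Phi$ is then $(\Phi \boldsymbol c)_i \, \Phi_{i,j} = c_{k(i)}\,\Phi_{i,j}$, and the $(i,j)$ entry of $\Phi\,\Diag(\boldsymbol c)$ is $\Phi_{i,j}\, c_j$. It therefore suffices to show $c_{k(i)}\,\Phi_{i,j} = c_j\,\Phi_{i,j}$ for all $i$ and $j$.

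The comparison splits into the two cases governed by the single nonzero entry in each row. If $\Phi_{i,j} = 0$, both sides vanish. If $\Phi_{i,j} = 1$, then $j = k(i)$ by uniqueness, whence $c_{k(i)} = c_j$ and the two sides agree. This exhausts all entries and establishes the identity. I do not expect a genuine obstacle here; the only point requiring a little care is the bookkeeping of the two different sizes ($\Diag(\Phi \boldsymbol c)$ is $n_2 \times n_2$, whereas $\Diag(\boldsymbol c)$ is $n_1 \times n_1$), but this is reconciled automatically by the row/column scaling interpretation, since both products are $n_2 \times n_1$.
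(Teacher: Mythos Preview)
Your proof is correct and follows essentially the same approach as the paper: both define the function picking out the unique nonzero column in each row (the paper calls it $\sigma(i)$, you call it $k(i)$), compute the $(i,j)$ entry of each side, and compare via the two cases $j=\sigma(i)$ and $j\neq\sigma(i)$.
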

 
 \begin{proof} Let $\sigma(i)$ be the position of the 
 entry 1 in the $i$th row of $\Phi$.
 Then the $i$th diagonal element of $\Diag (\Phi \boldsymbol c)$
 is $c_{\sigma(i)}$.
 The $(i,j)$th entry of $\Diag (\Phi \boldsymbol c)\Phi$ is
 \[
 (\Diag (\Phi \boldsymbol c)\Phi)_{i,j} = 
 \begin{cases}
 c_{\sigma(i)} & \quad \text{if $j=\sigma(i)$,}\\
 0 &\quad \text{otherwise.}
 \end{cases}
 \]
 On the other hand, the $(i,j)$th entry of $\Phi \Diag(\boldsymbol c)$
 is 
 \begin{align*}
 (\Phi \Diag(\boldsymbol c))_{i,j}
& =\Phi_{i,j} c_j\\
&=
\begin{cases}
 c_{\sigma(i)} & \quad \text{if $j=\sigma(i)$,}\\
 0 &\quad \text{otherwise.}
 \end{cases}
 \end{align*}
 \end{proof}
 
\subsection{Arithmetical structures on graphs}\label{sec:arith structures}
An arithmetical structure on a connected graph $\Gamma=(V,E)$ with at least two vertices and with no loops
is a pair $(\R,\S)$ of maps $\R \colon V \rightarrow \zzz_+$
and $\S \colon V \rightarrow \zzz_+$ such that 
the greatest common divisor of the values $\R(v)$ for $v \in V$ 
is 1, and such that
\[
\S(v) =\frac 1 {\R(v)} \sum_{w \sim v} \R(w).
\]
 The arithmetical structure with $\R(v)=1$ and $\S(v) = \deg(v)$ 
for all $v$ is called the {\it natural arithmetical structure}. We will often identify a map $V\to \zzz_+$ with its vector of values 
(with respect to a fixed ordering of the vertices). We also note that it is more common in the literature to use $(\R, D)$ as notation for these maps; we use $(\R,\S)$ here to avoid a conflict of notation in the paper.

To each arithmetical structure $(\R,\S)$ on a graph $\Gamma$, we can associate a 
symmetric matrix $L=L(\Gamma;\S)$ called the 
\textit{arithmetical Laplacian matrix}, defined to be 
\[L = \Diag(\S) - A,\] 
where $A$ is the adjacency matrix of $\Gamma$. 
It follows from the relationship described above that $LR =\boldsymbol{0}$. 
Furthermore, Lorenzini proved in  \cite[Prop.~1.1]{Lo89} that $L(\Gamma;\S)$ is always a matrix of rank $n-1$. We note that the values of $\S$ are completely determined by $\R$, as seen in the formula above.
Similarly, the values of $\R$ are determined by $\S$, since $\R$ is the unique element of the kernel of $L$ with positive integer entries and with $\gcd(R)= 1$.

We say that a \textit{divisor} $\delta$ on a graph $\Gamma=(V,E)$ is a function 
$\delta \colon V \rightarrow \zzz$.  The {\it degree} of 
a divisor $\delta$ with respect to an arithmetical structure 
$(\R, \S)$ on $\Gamma$ is 
\[
\deg(\delta) = \sum_{v \in V} \delta(v) R(v).
\]
In other words, the degree of a divisor is $\delta$ is the dot product of $\delta$ and $R$. 

Divisors on $\Gamma$ form a group under addition, denoted by $\Div(\Gamma)$, and the divisors with degree~0 form a subgroup, denoted by $\Div^0(\Gamma)$.
If $f \colon V \rightarrow \zzz$ is an integer-valued function 
on vertices, we can define a divisor 
$\divisor(f)$ by
\[
\divisor(f)(v) =\S(v)f(v) - \sum_{w \sim v} f(w),
\]
i.e., $\divisor(f) = Lf$. 
Divisors of the form $\divisor(f)$ are called \textit{principal}.
A principal divisor has degree 0 
since $L$ is symmetric matrix, and therefore
$R^tLf=0$.
The set of principal divisors forms a 
subgroup $\Prin(\Gamma)$ of $\Div^0(\Gamma)$, and the quotient $\Div^0(\Gamma) / \Prin(\Gamma)$ 
is called the \textit{arithmetical critical group} of $\Gamma$ with respect 
to the arithmetical structure $(\R, \S)$, and is denoted 
$\mathcal K (\Gamma; \R, \S)$. This group is also isomorphic to the torsion part of the cokernel of $L$ as a map $\zzz^n\to\zzz^n$. 
Classifying arithmetical structures and their corresponding critical groups on various families of graphs has been the subject of study in several papers in recent years, including cycles and paths \cite{CV18,BCCGGKMMV18}, 
star graphs and complete graphs \cite{ADGL24}, 
bidents \cite{ABDGGL20}, 
and several others.

Given a graph $\Gamma$ and an arithmetical structure $(\R,\S)$ on $\Gamma$, the arithmetical critical group can be computed  with computer algebra systems such as Mathematica or SageMath by computing the Smith normal form of $L(\Gamma;\S)$. Since the rank of $L(\Gamma;\S)$ is $n-1$, the diagonal of the Smith normal form is of the form $(e_1,e_2,\ldots, e_{n-1},0)$, where $e_1, e_2, \dots 
, e_{n-1}$ are nonzero integers, unique up to 
multiplication by $\pm 1$. 
The associated arithmetical critical group is \[\K(\Gamma;\R,\S) \cong \zzz/e_1\zzz \oplus\zzz/e_2\zzz \oplus \cdots \oplus \zzz/e_{n-1}\zzz.\] 
A well-known fact about the Smith normal form is that $e_i\mid e_{i+1}$ and that $|e_i| = g_i/g_{i-1}$ where $g_i$ is the gcd of all $i\times i$ minors of $L(\Gamma;\S)$. Note that this implies that if there is an $(n-2)\times(n-2)$ minor of $L(\Gamma;\S)$ equal to $\pm1$, then $\K(\Gamma;\R,\S)$ is a cyclic group.

The critical group of $\Gamma$ with respect to the natural arithmetical structure, defined above to have $\R(v) = 1$ and $\S(v) = \deg(v)$ for all $v$, will be called 
the {\it natural critical group} of $\Gamma$. (This group is called the Jacobian in \cite{BN09}, and has also been called the Picard group \cite{BdlHN97,Bi99} or the sandpile group \cite{Dh90}.)
On a fixed graph, the critical group with respect to another arithmetical structure can 
be larger or smaller than the natural critical group. In \cite[Cor.~2.10]{Lo24}, Lorenzini shows that for any graph $\Gamma,$ there is some arithmetical structure on $\Gamma$ with trivial critical group.

\begin{example}
If $\Gamma$ is the wheel group $W_7$ with 
 central vertex $v_0$ and rim vertices 
 $v_1,  \dots , v_6$, 
then one can compute
the natural critical group of $\Gamma$ to be isomorphic to $\zzz / 8 \zzz \times 
\zzz / 40 \zzz$. If we let $\R = (1, 3, 1, 1, 3, 1, 1)^t$ and 
 $\S = (10, 1, 5, 5, 1, 5, 5)^t$, then the arithmetical critical group of 
$\Gamma$ with respect to $(\R, \S)$ is  $\K(\Gamma;\R,\S)\cong\zzz / 8 \zzz \times 
\zzz / 24 \zzz$, and if we let $\R' = (3, 1, 1, 1, 1, 1, 1)^t$ and 
 $\S' = (2, 5, 5, 5, 5, 5, 5)^t$, then the arithmetical critical group of 
$\Gamma$ with respect to $(\R', \S')$ is  $\K(\Gamma;\R',\S')\cong\zzz / 8 \zzz \times 
\zzz / 168 \zzz$.
\end{example}

\section{Homomorphisms of critical groups}\label{sec:main}

Given a non-constant harmonic morphism 
$\phi\colon \Gamma_2\to\Gamma_1$, we will define the pullback of an arithmetical structure on $\Gamma_1$ to an arithmetical structure on $\Gamma_2$. The harmonic morphism $\phi$ will determine homomorphisms, namely the pushforward $\phi_*$ and the pullback $\phi^*$, between the respective arithmetical critical groups associated to these structures. We prove the surjectivity and injectivity of $\phi_*$ and $\phi^*$, respectively. We will finish the section with some consequences of these results.

\subsection{The pullback of an arithmetical structure}

Let $\phi \colon \Gamma_2 \rightarrow \Gamma_1$ be a non-constant harmonic 
morphism of connected graphs, and 
let $(\R_1, \S_1)$ be an arithmetical structure on $\Gamma_1$.
We define the {\it pullback of $(\R_1, \S_1)$} to be the pair 
$\R_2 \colon V(\Gamma_2) \rightarrow \zzz_+$ 
and $\S_2 \colon V(\Gamma_2) \rightarrow \zzz_+$
given by 
\begin{equation}
\label{eqn:R2S2}
    \R_2(v) = R_1(\phi(v)) \quad \text{and} \quad
    \S_2(v) = \mu(v) S_1(\phi(v))+\nu(v),
\end{equation}
for $v \in V(\Gamma_2)$. 
In matrix form, $\R_2 = \Phi \R_1$
and $\S_2 =D_\mu \Phi \S_1 + \boldsymbol{\nu}$.

In the first theorem of this section, we show that 
$(\R_2,\S_2)$ is an arithmetical structure on $\Gamma_2$.
Equation (\ref{eqn:harmonic-L-identity}) generalizes
\cite[Proposition~3.3.25]{JM17} for the natural 
arithmetical structure.

\begin{theorem}\label{thm:harmonic-L-identity}
Let $\Gamma_1$ and $\Gamma_2$ be connected graphs, let $\phi \colon \Gamma_2 \rightarrow \Gamma_1$ be a 
non-constant harmonic morphism, and let $(\R_1, \S_1)$ be an arithmetical structure on $\Gamma_1$.
Then the pullback $(\R_2, \S_2)$ 
described in Equation (\ref{eqn:R2S2}) 
is an arithmetical structure on $\Gamma_2$.
Furthermore,
\begin{equation}
\label{eqn:harmonic-L-identity}
L_2 \Phi =D_\mu \Phi L_1.
\end{equation}
\end{theorem}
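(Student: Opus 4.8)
The plan is to establish the matrix identity (\ref{eqn:harmonic-L-identity}) first, since it does most of the work, and then deduce from it that $(\R_2, \S_2)$ is a genuine arithmetical structure. Recall that $L_1 = \Diag(\S_1) - A_1$ and $L_2 = \Diag(\S_2) - A_2$, and that by definition $\S_2 = D_\mu \Phi \S_1 + \boldsymbol{\nu}$, so $\Diag(\S_2) = \Diag(D_\mu \Phi \S_1) + D_\nu$. I would compute $L_2 \Phi$ directly by substituting these expressions:
\[
L_2 \Phi = \Diag(\S_2)\Phi - A_2 \Phi = \Diag(D_\mu \Phi \S_1)\Phi + D_\nu \Phi - A_2 \Phi.
\]
The key simplification on the first term is Lemma~\ref{lemma:diagonal-technical}. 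Since $D_\mu = \Diag(\boldsymbol\mu)$ and $\Phi$ is a zero-one matrix with exactly one $1$ per row, I would first note that $D_\mu \Phi \S_1 = \Phi(\text{something})$ is not quite the right shape, so the cleaner route is to write $\Diag(D_\mu\Phi\S_1) = D_\mu \Diag(\Phi \S_1)$ (both are diagonal, and the $i$th diagonal entry of each is $\mu(v_i)\cdot (\Phi\S_1)_i$), and then apply Lemma~\ref{lemma:diagonal-technical} with $\boldsymbol c = \S_1$ to get $\Diag(\Phi \S_1)\Phi = \Phi \Diag(\S_1)$. This yields $\Diag(D_\mu\Phi\S_1)\Phi = D_\mu \Phi \Diag(\S_1)$.

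Next I would invoke Proposition~\ref{propn:harmonic-adjacency-criterion}, which gives $A_2 \Phi = D_\nu \Phi + D_\mu \Phi A_1$. Substituting both simplifications,
\[
L_2 \Phi = D_\mu \Phi \Diag(\S_1) + D_\nu \Phi - \bigl(D_\nu \Phi + D_\mu \Phi A_1\bigr) = D_\mu \Phi \bigl(\Diag(\S_1) - A_1\bigr) = D_\mu \Phi L_1,
\]
which is exactly (\ref{eqn:harmonic-L-identity}). The $D_\nu \Phi$ terms cancel cleanly, which is the mechanism that makes the vertical multiplicities contribute to $\S_2$ without spoiling the identity.

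Having the identity, I would verify the arithmetical-structure axioms for $(\R_2, \S_2)$. First, $\R_2 = \Phi \R_1$ is a positive integer vector since $\R_1$ is and each row of $\Phi$ selects one entry of $\R_1$. To check the defining relation $L_2 \R_2 = \boldsymbol 0$, apply the identity: $L_2 \R_2 = L_2 \Phi \R_1 = D_\mu \Phi L_1 \R_1 = D_\mu \Phi \boldsymbol 0 = \boldsymbol 0$, using that $L_1 \R_1 = \boldsymbol 0$ for the structure on $\Gamma_1$. This simultaneously confirms that $\S_2$ as defined is the correct vector, i.e. that $\S_2(v)\R_2(v) = \sum_{w\sim v}\R_2(w)$, since $L_2\R_2 = \boldsymbol 0$ is precisely that relation. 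The one genuine subtlety — the main obstacle — is the gcd condition: I must check $\gcd\{\R_2(v) : v \in V_2\} = 1$. Since $\phi$ is surjective on vertices (Lemma~\ref{lemma:surjective-harmonic-morphism}), every value $\R_1(x)$ appears among the $\R_2(v)$, so the set of values of $\R_2$ equals the set of values of $\R_1$; hence their gcds agree and $\gcd(\R_2) = \gcd(\R_1) = 1$. Finally I would note that $\S_2(v) = \mu(v)S_1(\phi(v)) + \nu(v)$ is a positive integer: it is a sum of nonnegative integers and is strictly positive because $\mu(v) \geq 1$ (as $\phi$ is non-constant and harmonic, each vertex has positive horizontal multiplicity) and $S_1(\phi(v)) \geq 1$. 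This completes the verification that $(\R_2,\S_2)$ is an arithmetical structure on $\Gamma_2$.
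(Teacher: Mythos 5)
Your proposal is correct and follows essentially the same route as the paper: both prove the identity $L_2\Phi = D_\mu\Phi L_1$ by combining Lemma~\ref{lemma:diagonal-technical} (to get $\Diag(\Phi \S_1)\Phi = \Phi\Diag(\S_1)$) with Proposition~\ref{propn:harmonic-adjacency-criterion} (to handle $D_\nu\Phi - A_2\Phi$), and then deduce $L_2\R_2 = L_2\Phi\R_1 = D_\mu\Phi L_1\R_1 = \boldsymbol 0$, with the gcd condition following from surjectivity of $\phi$ on vertices. One minor inaccuracy in your extra positivity check: a non-constant harmonic morphism can have vertices with $\mu(v)=0$ (the paper itself allows this in the proof of Corollary~\ref{cor:g1g2}); positivity of $\S_2(v)$ still holds because such a $v$ must then satisfy $\nu(v)\geq 1$ by connectedness, or more directly because $L_2\R_2=\boldsymbol 0$ forces $\S_2(v) = \frac{1}{\R_2(v)}\sum_{w\sim v}\R_2(w) > 0$.
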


\begin{proof}
Notice that $\gcd(R_2)=\gcd(R_1)=1$.
To prove that $(\R_2,\S_2)$ is an 
arithmetical structure, it is enough to show
that $L_2 \R_2=\boldsymbol 0$, 
where $L_2 = \Diag(S_2) - A_2$.
This result will follow immediately from 
 Equation (\ref{eqn:harmonic-L-identity}), 
since $\R_2= \Phi \R_1$, and $L_1 \R_1 = \boldsymbol 0$, and thus $L_2R_2=L_2\Phi R_1=D_\mu \Phi L_1R_1=\boldsymbol{0}.$

We now prove that $L_2 \Phi =D_\mu \Phi L_1$.
Since we define $\S_2 =D_\mu \Phi \S_1 + \boldsymbol{\nu}$,
\begin{equation*}
L_2 = D_\mu \Diag(\Phi \S_1) +  D_\nu - A_2.
\end{equation*}
By Lemma \ref{lemma:diagonal-technical},
 $\Diag(\Phi \S_1) \Phi = \Phi \Diag (\S_1)$.
By the adjacency matrix harmonic identity,
Proposition \ref{propn:harmonic-adjacency-criterion}, 
$(D_\nu  - A_2) \Phi = - D_\mu \Phi A_1$.
Therefore, 
\begin{align*}
L_2 \Phi 
&=D_\mu \Phi \Diag(\S_1) - D_\mu \Phi A_1\\
&= D_\mu \Phi L_1.
\end{align*}
\end{proof}

We note that for any graphs $\Gamma_1$ and $\Gamma_2$ with 
non-constant harmonic morphism 
$\phi \colon \Gamma_2\to\Gamma_1$, the natural arithmetical structure on $\Gamma_1$ pulls back to the natural arithmetical structure on $\Gamma_2$.
Let us also consider the following example involving non-natural arithmetical structures.

\begin{example}
\label{ex:C3-W5-part2}
Let $\phi \colon \Gamma_2 \rightarrow \Gamma_1$ 
be the harmonic morphism 
described in Examples~\ref{ex:C3-W5-part1} and \ref{ex:C3-W5-part1.5}, 
mapping the wheel graph on 5 vertices to the 3-cycle graph.
Let $\R_1 = (2,1,3)^t$ and $\S_1=(2,5,1)^t$. This defines an arithmetical
structure on $\Gamma_1$ since  
$L_1 \R_1 = \boldsymbol 0$ where \[L_1 = 
\left(
\begin{array}{rrr}
2&-1&-1\\ -1&5&-1 \\ -1&-1&1
\end{array} \right).\]
Notice that $R_2 = \Phi \R_1$ is given by
$\R_2 = (2,1,1,3,3)^t$
and $\S_2 = D_\mu\Phi S_1 + \boldsymbol{\nu}$ is given by $\S_2 = (4,6,6,2,2)^t$. 
In this case, we have that  $L_2 \Phi = D_\mu \Phi L_1$ and $L_2 \R_2 = \boldsymbol 0$, where 
\[
L_2 = 
\left(
\begin{array}{rrrrr}
4&-1&-1&-1&-1\\ -1&6&-1&0&-1\\
-1&-1&6&-1&0 \\ -1&0&-1&2&-1 \\ -1& -1&0&-1&2
\end{array} \right),
\]
and thus $(\R_2,\S_2)$ is also an arithmetical structure. 
\end{example}

\subsection{Surjectivity of the pushforward homomorphism}
Let $\phi \colon \Gamma_2 \rightarrow \Gamma_1$ 
be a non-constant harmonic morphism of connected 
graphs.  Recall that by Lemma \ref{lemma:surjective-harmonic-morphism},
such a map is surjective on vertices and edges.
We define a pushforward map 
$\phi_* \colon \Div(\Gamma_2) \rightarrow \Div(\Gamma_1)$ 
as follows.
If $\delta$ is a divisor on $\Gamma_2$, 
the {\it pushforward} divisor $\phi_* \delta$ on $\Gamma_1$ 
is given by 
\[
\phi_* \delta(x) = \sum_{v \in \phi^{-1}(x)} \delta(v)
\]
for each $x\in \Gamma_1$. In matrix form, we can write $\phi_* \delta = \Phi^t \delta$.
Since $\phi$ is surjective, the map 
$\phi_* \colon \Div(\Gamma_2) \rightarrow \Div(\Gamma_1)$, 
is also surjective.

Recall that the degree of a divisor $\delta$ on $\Gamma$ with respect to an arithmetical structure $(R,S)$ is given by $\deg(\delta) = R^t\delta$. 
Let $(R_1,S_1)$ be an arithmetical structure on $\Gamma_1$, 
and let $(R_2,S_2)$ be the pullback arithmetical structure on 
$\Gamma_2$, described in Equation (\ref{eqn:R2S2}). 

\begin{lemma}
\label{lemma:deg-pushforward}
The degree of the divisor $\phi_* \delta$ with respect to $(\R_1,\S_1)$ is equal to 
the degree of the divisor $\delta$ with respect to $(\R_2,\S_2)$.
\end{lemma}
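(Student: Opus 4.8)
The plan is to verify the claimed equality of degrees by a direct matrix computation, using the formulas already established for the pushforward and for the pullback of the $R$-vector. Recall that the degree of a divisor $\delta$ on $\Gamma_2$ with respect to $(\R_2, \S_2)$ is $\R_2^t \delta$, and the degree of $\phi_* \delta = \Phi^t \delta$ on $\Gamma_1$ with respect to $(\R_1, \S_1)$ is $\R_1^t (\Phi^t \delta)$. So the statement to prove is simply
\[
\R_1^t \Phi^t \delta = \R_2^t \delta.
\]

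The key observation is that this follows immediately from the defining relation $\R_2 = \Phi \R_1$ from Equation~(\ref{eqn:R2S2}). Taking the transpose gives $\R_2^t = \R_1^t \Phi^t$, and multiplying on the right by $\delta$ yields exactly the desired identity. First I would write $\deg(\phi_* \delta) = \R_1^t (\Phi^t \delta)$ by applying the degree formula with respect to $(\R_1, \S_1)$ and substituting the matrix form $\phi_* \delta = \Phi^t \delta$. Then I would rewrite $\R_1^t \Phi^t = (\Phi \R_1)^t = \R_2^t$ using the transpose of the pullback formula, and conclude that $\deg(\phi_* \delta) = \R_2^t \delta = \deg(\delta)$.

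I do not expect any genuine obstacle here: the lemma is essentially a bookkeeping consequence of how the pullback $\R_2$ was constructed so as to make the pushforward degree-preserving, and the entire argument is a single transpose manipulation. The only thing to be careful about is keeping the two arithmetical structures straight—$\phi_* \delta$ has its degree measured against $(\R_1, \S_1)$ while $\delta$ has its degree measured against $(\R_2, \S_2)$—since the degree pairing depends on the choice of structure. Stating both degree formulas explicitly before equating them makes this transparent and removes any ambiguity.
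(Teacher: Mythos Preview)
Your proposal is correct and matches the paper's own proof essentially line for line: both compute $\deg(\phi_*\delta)=R_1^t\Phi^t\delta$ and then use $R_2=\Phi R_1$ (equivalently $R_2^t=R_1^t\Phi^t$) to identify this with $R_2^t\delta=\deg(\delta)$.
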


\begin{proof}
Let $\Phi$ be the vertex map matrix of $\phi$ with respect 
to fixed orderings of the vertices of $\Gamma_2$ 
and $\Gamma_1$.
Recall from Equation (\ref{eqn:R2S2})
that  $R_2 = \Phi R_1$. The degree of a divisor $\delta$ on $\Gamma_2$ 
is $R_2^t \delta = R_1^t \Phi^t \delta$, and the degree of a divisor $\xi$ 
on $\Gamma_1$ is $R_1^t \xi$. 
Since the map $\phi_*$ is given in matrix form by $\phi_* \delta = \Phi^t \delta$, the degree 
of $\phi_*\delta$ is the same as the degree of $\delta$.
\end{proof}

The following theorem (and Theorem \ref{thm:injective}
below on injectivity)
are analogues of Baker and Norine's results for the natural critical groups \cite{BN09}.

\begin{theorem}
\label{thm:surjective-on-critical-groups}
Let $\phi \colon \Gamma_2 \rightarrow \Gamma_1$ 
be a non-constant harmonic morphism of connected graphs. 
Let $(R_1,S_1)$ be an arithmetical structure on $\Gamma_1$, 
and let $(R_2,S_2)$ be the pullback arithmetical structure on 
$\Gamma_2$, described in Equation (\ref{eqn:R2S2}).
Then $\phi$ induces a surjective homomorphism 
of arithmetical critical groups:
\[
\phi_* \colon \mathcal K(\Gamma_2; R_2, S_2) 
\rightarrow \mathcal K(\Gamma_1; R_1, S_1).
\]
\end{theorem}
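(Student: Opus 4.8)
The plan is to show that $\phi_*$ descends to a well-defined map on critical groups, that it is a homomorphism, and finally that it is surjective. The critical group $\mathcal K(\Gamma_i; R_i, S_i)$ is the quotient $\Div^0(\Gamma_i)/\Prin(\Gamma_i)$, so there are three things to verify. First, $\phi_*$ must send degree-zero divisors to degree-zero divisors: this is immediate from Lemma~\ref{lemma:deg-pushforward}, since if $\delta$ has degree $0$ with respect to $(R_2, S_2)$, then $\phi_* \delta$ has the same degree $0$ with respect to $(R_1, S_1)$. Second, $\phi_*$ must carry principal divisors to principal divisors, which is the crux of well-definedness. Third, $\phi_*$ is additive since it is given by left multiplication by the fixed matrix $\Phi^t$, so it is automatically a group homomorphism on $\Div(\Gamma_2)$ and on any subgroup; this is essentially free.

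The key computation is the second point. A principal divisor on $\Gamma_2$ has the form $L_2 f$ for some $f \colon V_2 \to \zzz$, and I want to show $\phi_*(L_2 f) = \Phi^t L_2 f$ is principal on $\Gamma_1$, i.e., equal to $L_1 g$ for some integer vector $g$. The tool for this is the transpose of the harmonic identity from Theorem~\ref{thm:harmonic-L-identity}, namely $L_2 \Phi = D_\mu \Phi L_1$. Taking transposes and using that $L_1$, $L_2$ are symmetric and $D_\mu$ is diagonal (hence symmetric) gives $\Phi^t L_2 = L_1 \Phi^t D_\mu$. Therefore $\Phi^t L_2 f = L_1 \Phi^t D_\mu f = L_1 (\Phi^t D_\mu f)$, and since $\Phi^t D_\mu f$ is an integer vector, this exhibits $\phi_*(L_2 f)$ as the principal divisor $\divisor(\Phi^t D_\mu f)$ on $\Gamma_1$. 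Hence $\phi_*(\Prin(\Gamma_2)) \subseteq \Prin(\Gamma_1)$, and the induced map on quotients is well defined.

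For surjectivity on critical groups I would proceed as follows. Since $\phi$ is a non-constant harmonic morphism, $\phi_* \colon \Div(\Gamma_2) \to \Div(\Gamma_1)$ is surjective (this is noted just before the theorem, following from surjectivity on vertices in Lemma~\ref{lemma:surjective-harmonic-morphism}), and by Lemma~\ref{lemma:deg-pushforward} it restricts to a map $\Div^0(\Gamma_2) \to \Div^0(\Gamma_1)$. I would argue this restriction is still surjective: given a degree-zero divisor $\xi$ on $\Gamma_1$, lift it to any divisor $\delta$ on $\Gamma_2$ with $\phi_* \delta = \xi$; then $\delta$ has degree $0$ by Lemma~\ref{lemma:deg-pushforward}, so it already lies in $\Div^0(\Gamma_2)$. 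Composing the surjection $\Div^0(\Gamma_2) \twoheadrightarrow \Div^0(\Gamma_1)$ with the quotient projection $\Div^0(\Gamma_1) \to \mathcal K(\Gamma_1; R_1, S_1)$ yields a surjection whose kernel contains $\Prin(\Gamma_2)$ by the computation above, so it factors through $\mathcal K(\Gamma_2; R_2, S_2)$ and the induced map is surjective.

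I expect the main obstacle to be the principal-divisor computation, specifically getting the transpose of the harmonic identity to land $L_2 f$ squarely in the image of $L_1$ over the integers rather than merely over the rationals; the point that $\Phi^t D_\mu f$ has integer entries (because $\Phi$ is a zero-one matrix and $D_\mu$, $f$ are integral) is what makes the pushforward of a principal divisor genuinely principal and not just a rational multiple. Everything else—additivity, degree preservation, and lifting for surjectivity—follows cleanly from the lemmas already established.
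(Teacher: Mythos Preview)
Your proposal is correct and follows essentially the same approach as the paper's own proof: both use Lemma~\ref{lemma:deg-pushforward} for degree preservation, the transposed harmonic identity $\Phi^t L_2 = L_1 \Phi^t D_\mu$ from Theorem~\ref{thm:harmonic-L-identity} (via symmetry of $L_1,L_2$) to show that principal divisors push forward to principal divisors with $g=\Phi^t D_\mu f$, and surjectivity of $\phi_*$ on $\Div^0$ (from surjectivity on vertices) to obtain surjectivity on critical groups. Your write-up is slightly more explicit about the factoring-through-the-quotient argument and the integrality of $\Phi^t D_\mu f$, but the substance is identical.
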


\begin{proof}
By Lemma \ref{lemma:deg-pushforward},
if $\delta$ is a degree 0 divisor on $\Gamma_2$, 
then $\phi_* \delta$ is a degree 0 divisor on $\Gamma_1$.
Furthermore, since $\phi$ is surjective, every degree 0 
divisor on $\Gamma_1$ is the image of some degree 0 
divisor on $\Gamma_2$.

To show that $\phi_*$ determines a homomorphism of 
critical groups, we need to show that if $\delta$ is a principal 
divisor on $\Gamma_2$, i.e.,  a  
divisor of the form $\delta=L_2 f$ 
for some $n_2$-tuple $f$ of integers, then 
$\Phi^t \delta$ is principal on $\Gamma_1$, i.e., of the 
form $\Phi^t \delta =L_1 g$ for some $n_1$-tuple $g$ 
of integers. But by Theorem \ref{thm:harmonic-L-identity},
and the fact that $L_1$ and $L_2$ are symmetric matrices,
$\Phi^t L_2 f =L_1 \Phi^t D_\mu f .$ 
Therefore, $\Phi^t \delta  = L_1 g$, 
where $g = \Phi^t D_\mu f$.

Therefore, $\phi$ determines a surjective homomorphism of critical groups 
\[
\phi_* \colon \K(\Gamma_2; R_2, S_2) 
\rightarrow \K(\Gamma_1; R_1, S_1).
\]
\end{proof}

\subsection{Injectivity of the pullback homomorphism}

Let $\phi \colon \Gamma_2 \rightarrow \Gamma_1$ 
be a non-constant harmonic morphism of connected graphs.
We define a pullback map $\phi^* \colon 
\Div(\Gamma_1)\rightarrow 
\Div(\Gamma_2)$ as follows.
If $\xi$ is a divisor on $\Gamma_1$, 
 the \textit{pullback} divisor $\phi^* \xi$ on $\Gamma_2$ is given by 
 \[
 \phi^*\xi(v) = \mu(v)\xi(\phi(v))
 \]
 for each $v\in\Gamma_2$.
In matrix form, we can write  $\phi^* \xi = D_\mu \Phi \xi$.

Let $(R_1,S_1)$ be an arithmetical structure on $\Gamma_1$, 
and let $(R_2,S_2)$ be the pullback arithmetical structure on 
$\Gamma_2$ with respect to $\phi$.

\begin{lemma}
\label{propn:degree-pullback}
The degree of $\phi^* \xi$ is equal to $\deg(\phi) \deg(\xi)$.
\end{lemma}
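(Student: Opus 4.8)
The plan is to compute both degrees directly from their matrix definitions and show they agree. Recall that the degree of a divisor $\xi$ on $\Gamma_1$ with respect to $(R_1, S_1)$ is $\deg(\xi) = R_1^t \xi$, and the degree of the pullback divisor $\phi^* \xi$ on $\Gamma_2$ with respect to $(R_2, S_2)$ is $\deg(\phi^* \xi) = R_2^t (\phi^* \xi)$. Substituting the matrix forms $R_2 = \Phi R_1$ and $\phi^* \xi = D_\mu \Phi \xi$, I would write
\[
\deg(\phi^* \xi) = R_2^t D_\mu \Phi \xi = (\Phi R_1)^t D_\mu \Phi \xi = R_1^t \Phi^t D_\mu \Phi \xi.
\]

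The key step is then to invoke Lemma \ref{lemma:degree-harmonic-morphism}, which states precisely that $\Phi^t D_\mu \Phi = \deg(\phi) I$. Applying this identity collapses the middle of the expression, giving
\[
\deg(\phi^* \xi) = R_1^t (\deg(\phi) I) \xi = \deg(\phi)\, R_1^t \xi = \deg(\phi) \deg(\xi),
\]
which is exactly the claimed formula. The computation is short once the definitions are unwound into matrix form, so the entire argument is a single chain of substitutions followed by one application of the degree lemma.

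I do not anticipate a serious obstacle here; the only point requiring care is bookkeeping of transposes and the order of multiplication, ensuring that $D_\mu$ (a diagonal, hence symmetric, matrix) sits in the correct position so that $\Phi^t D_\mu \Phi$ appears exactly in the form demanded by Lemma \ref{lemma:degree-harmonic-morphism}. Since $D_\mu^t = D_\mu$, transposing $R_2 = \Phi R_1$ yields $R_2^t = R_1^t \Phi^t$ cleanly, and the factor $D_\mu$ from the pullback definition lands naturally between $\Phi^t$ and $\Phi$. Thus the proof is essentially a verification that the two relevant pieces of data, the pullback of $R$ and the pullback of divisors, combine to produce the quadratic form governed by the degree identity.
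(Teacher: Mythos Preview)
Your proof is correct and follows essentially the same approach as the paper: both compute $\deg(\phi^*\xi) = R_2^t D_\mu \Phi \xi = R_1^t \Phi^t D_\mu \Phi \xi$ and then apply Lemma~\ref{lemma:degree-harmonic-morphism} to collapse $\Phi^t D_\mu \Phi$ to $\deg(\phi) I$.
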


\begin{proof}
Recall that if $\xi \colon V(\Gamma_1) \rightarrow \mathbb Z$ is a 
divisor on $\Gamma_1$, the {\it degree} of 
$\xi$ is $\R_1^t \xi$.
The degree of the pullback of $\xi$ is 
\begin{align*}
\deg(\phi^* \xi) &= R_2^t (D_\mu \Phi \xi)\\
&= R_1^t \Phi^t D_\mu \Phi \xi.
\end{align*}
But by Lemma \ref{lemma:degree-harmonic-morphism},
$\Phi^t D_\mu \Phi = \deg(\phi) I$. Therefore, 
$\deg(\phi^* \xi) = \deg(\phi) \deg(\xi)$.
\end{proof}

The fact that $\phi$ pulls back divisor classes to divisor classes 
is a consequence of the next proposition.

\begin{proposition}
If $\xi$ is a principal 
divisor on $\Gamma_1$, then $\phi^* \xi$ is a principal
divisor on $\Gamma_2$.
\end{proposition}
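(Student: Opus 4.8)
The plan is to argue entirely at the level of the matrix identity from Theorem \ref{thm:harmonic-L-identity}, which is exactly the tool that makes this proposition almost immediate. Since $\xi$ is principal on $\Gamma_1$, by definition it has the form $\xi = L_1 g$ for some integer vector $g$ of length $n_1$. My goal is to exhibit an integer vector $h$ of length $n_2$ with $\phi^* \xi = L_2 h$, which is precisely what it means for $\phi^* \xi$ to be principal on $\Gamma_2$.

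First I would write the pullback in matrix form, using the definition $\phi^* \xi = D_\mu \Phi \xi$, and substitute $\xi = L_1 g$ to get $\phi^* \xi = D_\mu \Phi L_1 g$. Then I would invoke the harmonic Laplacian identity $L_2 \Phi = D_\mu \Phi L_1$ of Theorem \ref{thm:harmonic-L-identity} to rewrite $D_\mu \Phi L_1 g = L_2 \Phi g$. Grouping the factors, this gives
\[
\phi^* \xi = L_2 (\Phi g),
\]
so the natural candidate is $h = \Phi g$. The only remaining point to check is that $h$ is genuinely an integer vector: this holds because $\Phi$ is a zero-one matrix and $g$ has integer entries, so $\Phi g \in \zzz^{n_2}$. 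Hence $\phi^* \xi = L_2 h$ with $h$ integral, and $\phi^* \xi$ is principal.

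I do not expect any serious obstacle here; the content is carried entirely by Equation (\ref{eqn:harmonic-L-identity}), and the argument is the ``untransposed'' companion of the computation used in the proof of Theorem \ref{thm:surjective-on-critical-groups}, where the transpose identity $\Phi^t L_2 f = L_1 \Phi^t D_\mu f$ played the analogous role. The only thing worth stating explicitly is the integrality of $\Phi g$, which is immediate. This proposition is what licenses defining the induced pullback map $\phi^*$ on critical groups, since it shows $\Prin(\Gamma_1)$ maps into $\Prin(\Gamma_2)$, and combined with Lemma \ref{propn:degree-pullback} (degree-0 divisors pull back to degree-0 divisors, as $\deg(\phi^*\xi) = \deg(\phi)\deg(\xi)$) it sets up the injectivity result in Theorem \ref{thm:injective}.
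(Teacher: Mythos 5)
Your proof is correct and is essentially identical to the paper's: both substitute $\xi = L_1 g$, apply the identity $L_2\Phi = D_\mu\Phi L_1$ from Theorem \ref{thm:harmonic-L-identity} to get $\phi^*\xi = L_2(\Phi g)$, and take $\Phi g$ as the witness. Your explicit remark that $\Phi g$ is integral is a small, harmless addition the paper leaves implicit.
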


\begin{proof}
Suppose that $\xi$ is a principal 
divisor on $\Gamma_1$, i.e.,  a divisor 
of the form $\xi = L_1 g$, for some 
$n_1$-tuple $g$ of integers.
Then $\phi^*\xi =D_\mu \Phi L_1 g$, which 
is equal to $L_2 \Phi g$,
by Theorem \ref{thm:harmonic-L-identity}.
Let $f = \Phi g$. Then $\phi^* \xi 
=L_2 f$, so $\phi^* \xi$ is principal.
\end{proof}


\begin{example}
\label{ex:C3-W5-part3}
Let $\phi \colon \Gamma_2 \rightarrow \Gamma_1$ be 
the harmonic morphism
of graphs with arithmetical structures described in Example 
\ref{ex:C3-W5-part2}. Let $\xi = (-4,5,1)^t$.
Then $\xi = L_1 g$, where $g = (1,2,4)^t$. 
We calculate that $\phi^* \xi = (-8,5,5,1,1)^t$.
Let $f = \Phi g$, i.e., $f = (1,2,2,4,4)^t$. Then 
$L_2f = (-8,5,5,1,1)^t = \phi^* \xi$.
\end{example}


The proof of the following theorem on injectivity of the pullback is more subtle than the proof of Theorem~\ref{thm:surjective-on-critical-groups} on surjectivity of the pushforward. 

\begin{theorem}\label{thm:injective}
Let $\phi \colon \Gamma_2 \rightarrow \Gamma_1$ 
be a non-constant harmonic morphism of connected graphs. 
Let $(R_1,S_1)$ be an arithmetical structure on $\Gamma_1$, 
and let $(R_2,S_2)$ be the pullback arithmetical structure on 
$\Gamma_2$.
Then $\phi$ induces an injective homomorphism 
of arithmetical critical groups:
\[
\phi^* \colon \mathcal K(\Gamma_1; R_1, S_1) 
\rightarrow \mathcal K(\Gamma_2; R_2, S_2).
\]
\end{theorem}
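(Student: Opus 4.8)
The plan is to deduce the injectivity of $\phi^*$ from the surjectivity of $\phi_*$ (Theorem~\ref{thm:surjective-on-critical-groups}) by exhibiting $\phi^*$ and $\phi_*$ as adjoints with respect to a perfect pairing on the arithmetical critical groups. Concretely, for each $i$ I would equip $\mathcal K(\Gamma_i; R_i, S_i)$ with the symmetric $\mathbb Q/\mathbb Z$-valued pairing defined as follows: given degree-$0$ divisors $\alpha,\beta$ on $\Gamma_i$, choose a rational vector $g$ with $L_i g = \beta$ (possible since $\beta$ is orthogonal to $R_i$ and $\operatorname{im}_{\mathbb Q} L_i = R_i^\perp$ by symmetry of $L_i$) and set $\langle[\alpha],[\beta]\rangle_i = \alpha^t g \bmod \mathbb Z$. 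I would first check that this is well defined (independence of the choice of $g$ uses $\alpha^t R_i = \deg(\alpha) = 0$; independence of the representative of $[\alpha]$ uses that $(L_i h)^t g = h^t L_i g = h^t\beta \in \mathbb Z$ for integral $h$) and symmetric (using symmetry of $L_i$).

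The key structural input is that this pairing is perfect, i.e.\ nondegenerate on the finite group $\mathcal K(\Gamma_i;R_i,S_i)$. This is exactly the monodromy pairing on the group of components of the associated N\'eron model (see \cite{Lo90}); more elementarily, it is the standard nondegenerate discriminant pairing carried by the torsion of the cokernel of a symmetric integer matrix of corank $1$. I expect verifying, or correctly invoking, this perfectness to be the main obstacle, since it is the only non-formal ingredient; everything else is bookkeeping with the identities already established.

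Next I would establish adjointness: $\langle \phi^*[\xi],[\delta]\rangle_2 = \langle [\xi],\phi_*[\delta]\rangle_1$ for all degree-$0$ divisors $\xi$ on $\Gamma_1$ and $\delta$ on $\Gamma_2$. To compute the left side, choose rational $g_2$ with $L_2 g_2 = \delta$, so that $\langle\phi^*[\xi],[\delta]\rangle_2 = (D_\mu\Phi\xi)^t g_2 = \xi^t\Phi^t D_\mu g_2$. For the right side I would use that $g_1 := \Phi^t D_\mu g_2$ is a valid rational solution of $L_1 g_1 = \Phi^t\delta = \phi_*\delta$: indeed $L_1\Phi^t D_\mu g_2 = \Phi^t L_2 g_2 = \Phi^t\delta$, where $\Phi^t L_2 = L_1\Phi^t D_\mu$ is the transpose of Equation~(\ref{eqn:harmonic-L-identity}) together with the symmetry of $L_1$ and $L_2$ (the same identity already used in the proof of Theorem~\ref{thm:surjective-on-critical-groups}). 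Hence $\langle[\xi],\phi_*[\delta]\rangle_1 = \xi^t g_1 = \xi^t\Phi^t D_\mu g_2$, which agrees with the left side exactly.

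Finally I would conclude. Suppose $\phi^*[\xi] = 0$ in $\mathcal K(\Gamma_2;R_2,S_2)$. Then $\langle[\xi],\phi_*[\delta]\rangle_1 = \langle\phi^*[\xi],[\delta]\rangle_2 = 0$ for every class $[\delta]$. Since $\phi_*$ is surjective by Theorem~\ref{thm:surjective-on-critical-groups}, the classes $\phi_*[\delta]$ exhaust $\mathcal K(\Gamma_1;R_1,S_1)$, so $[\xi]$ pairs trivially with everything; perfectness of $\langle\cdot,\cdot\rangle_1$ then forces $[\xi]=0$, giving injectivity. As an alternative that avoids the pairing, one could instead try to show that any integral $f$ with $\phi^*\xi = L_2 f$ can be replaced by a fiber-constant function $\Phi g$ with $L_2(\Phi g)=\phi^*\xi$; combined with injectivity of $D_\mu\Phi$ on $\mathbb Q^{n_1}$ (each column of $D_\mu\Phi$ is nonzero with disjoint support, since $\sum_{v\in\phi^{-1}(x)}\mu(v)=\deg(\phi)$) this would yield $\xi = L_1 g$. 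However, producing the fiber-constant representative appears to be the delicate point, which is why I prefer the adjointness route.
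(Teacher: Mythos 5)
Your proposal takes a genuinely different route from the paper's, and its formal skeleton is correct: the pairing is well defined and symmetric for the reasons you give, the adjointness identity $\langle \phi^*[\xi],[\delta]\rangle_2 = \langle[\xi],\phi_*[\delta]\rangle_1$ follows exactly as you compute from the transposed identity $\Phi^t L_2 = L_1\Phi^t D_\mu$ of Theorem~\ref{thm:harmonic-L-identity}, and combined with the surjectivity of $\phi_*$ (Theorem~\ref{thm:surjective-on-critical-groups}) this reduces injectivity of $\phi^*$ to nondegeneracy of the pairing on $\mathcal K(\Gamma_1;R_1,S_1)$. The one load-bearing step you do not supply is that nondegeneracy. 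It is true, and provable by elementary lattice theory: writing $\mathcal K(\Gamma;R,S)=(\mathbb Z^n\cap R^\perp)/L\mathbb Z^n$, if $\alpha=Lh$ (with $h$ rational) pairs integrally with every class, then $h^t\beta\in\mathbb Z$ for all $\beta\in\mathbb Z^n\cap R^\perp$, and since $\mathbb Z^n\cap R^\perp$ is a saturated sublattice of $\mathbb Z^n$ this forces $h\in\mathbb Z^n+\mathbb Q R$, hence $\alpha\in L\mathbb Z^n$. But as written your proof outsources exactly this, its only non-formal ingredient, so it is not yet complete. The paper instead argues directly, and its argument is essentially the ``alternative that avoids the pairing'' you sketch and then set aside: from $D_\mu\Phi\xi=L_2 f$ it multiplies by $\Phi^t$ to get $d\xi=L_1\Phi^t D_\mu f$ with $d=\deg(\phi)$, sets $\beta=(1/d)\Phi^t D_\mu f$ and $\alpha=\Phi\beta$, notes $L_2\alpha=D_\mu\Phi L_1\beta=D_\mu\Phi\xi=L_2 f$, and concludes $f-\alpha=qR_2=q\Phi R_1$, so $f=\Phi(\beta+qR_1)$. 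The ``delicate point'' you worried about --- producing a fiber-constant representative --- evaporates here because the kernel of $L_2$ is spanned by $R_2=\Phi R_1$, which is itself fiber-constant; integrality of $g=\beta+qR_1$ then follows since each row of $\Phi$ contains a single $1$. In short, your duality argument is more conceptual (it literally dualizes the surjectivity theorem) but requires proving perfectness of the monodromy pairing, whereas the paper's three-line linear-algebra argument is self-contained; I would encourage you either to supply the nondegeneracy proof or to finish your own second sketch along the paper's lines.
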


\begin{proof}
Suppose that $\xi$ is a degree 0 divisor on $\Gamma_1$ 
such that $\phi^* \xi$ is a principal divisor 
on $\Gamma_2$, i.e., 
$D_\mu \Phi \xi = L_2 f$ 
for some $n_2$-tuple $f$ of integers.
We will show that $\xi$ is a principal divisor on $\Gamma_1$.

First note that since $\xi$ is orthogonal to the kernel of 
$L_1$, there is a 
$\qqq$-valued $n_1$-tuple $\beta$
such that $\xi = L_1 \beta$.
Indeed, we can construct such a  $\beta$ as follows. 
Multiplying both sides of the equation
$D_\mu \Phi \xi = L_2 f$ on the left by $\Phi^t$, we obtain
$\Phi^t D_\mu \Phi \xi = \Phi^t L_2 f.$
By Lemma \ref{lemma:degree-harmonic-morphism},
 $\Phi^t D_\mu \Phi = d I$, 
where $d=\deg(\phi)$ and $I$ is the $n_1\times n_1$ identity matrix.
Furthermore, 
by Theorem \ref{thm:harmonic-L-identity}
and the fact that $L_1$ and $L_2$ are symmetric, 
$\Phi^t L_2 = L_1 \Phi^t D_\mu$. Therefore,
\[d\xi = \Phi^tD_\mu\Phi\xi = \Phi^tL_2 f=  L_1 \Phi^t D_\mu f .\]
Let $\beta =  (1/ d) \Phi^t D_\mu f.$  
Then $\xi = L_1 \beta$.

Let $\alpha$ be 
$\qqq$-valued $n_2$-tuple 
$\alpha = \Phi \beta$. 
By Theorem \ref{thm:harmonic-L-identity},
$L_2 \alpha=D_\mu \Phi L_1 \beta$. 
Since $L_1 \beta =\xi$, we have $L_2 \alpha = D_\mu \Phi \xi$.
But we assumed that $D_\mu \Phi \xi= L_2 f$.
Since $L_2 \alpha = L_2 f$, we have that $f - \alpha = q \R_2$ 
for some rational number $q$ (the kernel of $L_2$ is spanned by $\R_2$).
Therefore, $f = \Phi( \beta + q \R_1)$.  
The function $f$ is integer-valued, 
and each row of $\Phi$ contains exactly one 1 and all other entries 0, and therefore, 
the $n_1$-tuple $g=\beta + q \R_1$ is also integer-valued.  In addition, 
$L_1g = L_1 \beta = \xi$, since 
$L_1 R_1 = \boldsymbol 0$, and 
thus, $\xi$ is principal.

Therefore, $\phi$ determines an injective homomorphism of critical groups 
\[
\phi^* \colon \K(\Gamma_1; R_1, S_1) 
\rightarrow \K(\Gamma_2; R_2, S_2).
\]
\end{proof}

\begin{example}\label{ex:wheel}
Let $\Gamma_1 = K_4$, the complete graph on 4 vertices
$x_0, x_1,x_2, x_3$.
Let $\Gamma_2 = W_7$, the wheel graph with central vertex $v_0$
and rim vertices $v_1, \dots , v_6$.
By identifying $x_0$ as the central vertex, 
we can view $K_4$ as the wheel graph $W_4$.
There is a harmonic morphism $\phi \colon \Gamma_2 
\rightarrow \Gamma_1$ that takes  $v_0$ to $x_0$ and
maps the 6 rim vertices of $\Gamma_2$ in a natural way 
to the 3 rim vertices of $\Gamma_1$. 
The horizontal multiplicity of $v_0$ is 2, and all other 
vertices of $\Gamma_2$ have horizontal multiplicity 1.
There are no vertical edges.

The arithmetical structure on $\Gamma_1$ defined by 
the 
pair $\R_1 = (1,1,1,3)^t$ and $\S_1 = (5,5,5,1)^t$
has critical group $\K(\Gamma_1;\R_1,\S_1)\cong\zzz /2 \zzz \times \zzz /6 \zzz$.
The pullback arithmetical structure 
on $\Gamma_2$, defined by the pair
$R_2=(1,1,1,3,1,1,3)^t$ and $S_2 = (10,5,5,1,5,5,1)^t$,  has 
$\K(\Gamma_2;\R_2,\S_2)\cong\zzz /8 \zzz \times \zzz /24 \zzz$.

As another example, the pair $\R_1 = (3,1,1,1)^t$ and $\S_1 = (1,5,5,5)^t$
define an arithmetical structure on $\Gamma_1$ with  $\K(\Gamma_1;\R_1,\S_1)\cong\zzz /2 \zzz \times \zzz /6 \zzz$. 
The pullback arithmetical structure defined by the pair $R_2=(3,1,1,1,1,1,1)^t$ and $S_2 = (2,5,5,5,5,5,5)^t$ on $\Gamma_2$ has 
 $\K(\Gamma_2;\R_2,\S_2)\cong\zzz /8 \zzz \times \zzz /168 \zzz$.

If we form a new graph $\Gamma_2^\prime$ from 
$\Gamma_2$ by adding an edge between a pair of 
opposite rim vertices, the same map on vertices determines 
a harmonic morphism from $\Gamma_2^\prime$ to $\Gamma_1$ 
with one vertical edge.  
As noted above, the pair $\R_1 = (3,1,1,1)^t$ and $\S_1 = (1,5,5,5)^t$
define an arithmetical structure on $\Gamma_1$ with critical 
group $\K(\Gamma_1;\R_1,\S_1)\cong\zzz /2 \zzz \times \zzz /6 \zzz$.
The critical group of $\Gamma_2^\prime$ 
with respect to the pullback arithmetical structure $(R'_2,S'_2)$ in this case is 
$\K(\Gamma_2^\prime;\R'_2,\S'_2)\cong\zzz /4 \zzz \times \zzz /480 \zzz$, a very different answer than the one before.
\end{example}

\subsection{Consequences}

There are a few corollaries to the theorems in this section that we can state. The first corollary follows immediately from Theorem~\ref{thm:injective}.
\begin{corollary}
Let $(R_1,S_1)$ be an arithmetical structure on $\Gamma_1$ 
and let $(R_2,S_2)$ be the pullback arithmetical structure on 
$\Gamma_2$ by any non-constant harmonic morphism 
$\phi \colon \Gamma_2\to\Gamma_1$. Then, 
$|\mathcal{K}(\Gamma_1;R_1,S_1)|$ divides  $|\mathcal{K}(\Gamma_2;R_2,S_2)|$.

\end{corollary}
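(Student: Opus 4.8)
The plan is to derive this corollary directly from the injectivity result just proved. The key observation is that $\phi^*\colon \mathcal{K}(\Gamma_1; R_1, S_1) \rightarrow \mathcal{K}(\Gamma_2; R_2, S_2)$ is an injective group homomorphism by Theorem~\ref{thm:injective}. Both critical groups are finite abelian groups, since each is isomorphic to a direct sum $\zzz/e_1\zzz \oplus \cdots \oplus \zzz/e_{n-1}\zzz$ with the $e_i$ nonzero, as explained in the discussion of the Smith normal form in Section~\ref{sec:arith structures}. In particular, $|\mathcal{K}(\Gamma_1; R_1, S_1)|$ and $|\mathcal{K}(\Gamma_2; R_2, S_2)|$ are both finite.

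The main step is then a standard appeal to Lagrange's theorem. Since $\phi^*$ is an injective homomorphism of finite groups, its image $\phi^*\bigl(\mathcal{K}(\Gamma_1; R_1, S_1)\bigr)$ is a subgroup of $\mathcal{K}(\Gamma_2; R_2, S_2)$ that is isomorphic to $\mathcal{K}(\Gamma_1; R_1, S_1)$. Hence the order of this subgroup equals $|\mathcal{K}(\Gamma_1; R_1, S_1)|$. By Lagrange's theorem, the order of any subgroup of a finite group divides the order of the group, so
\[
|\mathcal{K}(\Gamma_1; R_1, S_1)| = \bigl|\phi^*\bigl(\mathcal{K}(\Gamma_1; R_1, S_1)\bigr)\bigr| \ \text{divides}\ |\mathcal{K}(\Gamma_2; R_2, S_2)|.
\]
This completes the argument.

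I do not expect any genuine obstacle here, since the corollary is essentially a one-line consequence once injectivity is in hand. The only things worth being careful about are confirming finiteness of both groups (which follows from the rank-$(n-1)$ property of the arithmetical Laplacian, giving a finite torsion cokernel) and making sure the injective image is genuinely a subgroup, which is immediate because the image of a homomorphism is always a subgroup of the codomain. Everything else is the invocation of Lagrange's theorem.
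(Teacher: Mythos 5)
Your proof is correct and matches the paper's intent exactly: the paper states this corollary "follows immediately from Theorem~\ref{thm:injective}," and your argument (injective homomorphism, image is an isomorphic subgroup, Lagrange's theorem) is the standard spelling-out of that one-line deduction, including the correct justification of finiteness via the Smith normal form. Nothing further is needed.
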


Recall that the critical group of an arithmetical structure is 
determined by the Smith normal form of the Laplacian matrix.
The absolute values of the 
diagonal elements of the Smith normal 
form that are neither zero nor a unit 
determine  the invariant factors of the critical group.
For example, the natural critical group of the complete graph $K_n$ has $n-2$ invariant factors since it 
is isomorphic to $(\zzz / n \zzz)^{n-2}$. 
The cycle graph $C_n$ has cyclic natural critical group isomorphic to $\zzz / n \zzz$ with only 1 invariant factor. 
Each of the arithmetical critical groups in Example~\ref{ex:wheel} 
has 2 invariant factors.

\begin{corollary}\label{cor:complexity}
If a graph $\Gamma_1$  has an arithmetical 
critical group
with  $\gamma_1$ invariant factors, and 
if every arithmetical 
critical group of
a graph $\Gamma_2$ has
fewer than $\gamma_1$ invariant factors, then 
there is no non-constant harmonic morphism from $\Gamma_2$ to $\Gamma_1$.
\end{corollary}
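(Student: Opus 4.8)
The plan is to prove the contrapositive: I will assume that a non-constant harmonic morphism $\phi \colon \Gamma_2 \to \Gamma_1$ exists and derive a contradiction with the hypothesis on invariant factors. By assumption there is an arithmetical structure $(R_1, S_1)$ on $\Gamma_1$ whose critical group $\K(\Gamma_1; R_1, S_1)$ has exactly $\gamma_1$ invariant factors. Pulling this structure back along $\phi$ yields an arithmetical structure $(R_2, S_2)$ on $\Gamma_2$ by Theorem~\ref{thm:harmonic-L-identity}, and Theorem~\ref{thm:injective} then supplies an injective homomorphism $\phi^* \colon \K(\Gamma_1; R_1, S_1) \to \K(\Gamma_2; R_2, S_2)$. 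Thus $\K(\Gamma_1; R_1, S_1)$ is isomorphic to a subgroup of $\K(\Gamma_2; R_2, S_2)$, and the whole argument reduces to controlling how the number of invariant factors behaves under inclusion of finite abelian groups.

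The key step, which I expect to be the main obstacle, is the purely group-theoretic fact that for finite abelian groups the number of invariant factors cannot increase when passing to a subgroup. I would establish this by recalling that the number of invariant factors of a finite abelian group $G$ equals its minimal number of generators, which in turn equals $\max_p \dim_{\fff_p} G[p]$, where $G[p] = \{g \in G : pg = 0\}$ is the $p$-torsion subgroup and the maximum runs over all primes $p$. Writing $G \cong \zzz/e_1\zzz \oplus \cdots \oplus \zzz/e_k\zzz$ with $e_1 \mid \cdots \mid e_k$ and each $e_i \geq 2$, one sees that $\dim_{\fff_p} G[p]$ is exactly the number of $e_i$ divisible by $p$; any prime dividing $e_1$ divides every $e_i$ and so makes this count equal to $k$, while no prime can make it exceed $k$, so the maximum over $p$ is precisely $k$.

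With this characterization in hand, the monotonicity is immediate: if $A$ is a subgroup of $B$, then $A[p] \subseteq B[p]$ for every prime $p$, so $\dim_{\fff_p} A[p] \leq \dim_{\fff_p} B[p]$, and taking the maximum over $p$ shows that the number of invariant factors of $A$ is at most that of $B$. Applying this to the inclusion $\K(\Gamma_1; R_1, S_1) \hookrightarrow \K(\Gamma_2; R_2, S_2)$ gives that $\K(\Gamma_2; R_2, S_2)$ has at least $\gamma_1$ invariant factors. Since $(R_2, S_2)$ is an arithmetical structure on $\Gamma_2$, this contradicts the hypothesis that every arithmetical critical group of $\Gamma_2$ has fewer than $\gamma_1$ invariant factors, so no such harmonic morphism can exist.
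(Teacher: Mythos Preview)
Your proof is correct and follows exactly the approach implicit in the paper: the corollary is stated without proof there, relying on Theorem~\ref{thm:injective} to embed $\K(\Gamma_1;R_1,S_1)$ into the pullback critical group on $\Gamma_2$, together with the standard fact that a subgroup of a finite abelian group has at most as many invariant factors as the ambient group. Your explicit justification of that last step via $p$-torsion ranks is accurate and is a welcome addition, since the paper leaves this routine group-theoretic point unstated.
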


Note that a similar theorem holds for the natural critical group; that is, if the 
number of invariant factors of the natural critical group of $\Gamma_1$ is greater than the 
number of invariant factors of the natural critical group of $\Gamma_2$, then there cannot
exist a non-constant harmonic morphism from $\Gamma_2$ to $\Gamma_1$.
This has no application when $\Gamma_1$ is a tree since the 
natural critical group of any tree is trivial.
However, there are other arithmetical 
structures on trees that have non-trivial arithmetical critical groups.


\begin{example}
Let $\Star_n$ be the star graph on $n$ vertices 
with central vertex $v_0$ of degree $n-1$ and spoke vertices 
$v_1, \dots , v_{n-1}$ of degree 1. 
All arithmetical structures on $\Star_4$ are cyclic since there is a $2\times 2$ 
minor of its Laplacian (regardless of the entries of its diagonal) that is equal to 1. For example, the graph $\Star_4$ with 
arithmetical structure $R=(3,1,1,1)^t$, 
$S=(1, 3,3,3)^t$ has critical group $\K(\Star_4;\R,\S)\cong\zzz / 3 \zzz$.


However, there are non-cyclic critical groups on $\Star_5$.
For example, 
the graph $\Star_5$ with arithmetical structure $R=(6,1,1,2,2)^t$, 
$S=(1,6,6,3,3)^t$ has critical group $\K(\Star_5;\R,\S)\cong\zzz / 3 \zzz 
\times \zzz / 3 \zzz$.
In fact, if follows from \cite[Prop.~16]{ADGL24} that there is an arithmetical critical group on $\Star_n$ 
with $m$ invariant factors 
for any $0\leq m\leq n-3.$ 
\end{example}

In the next example, we will see how the Corollary~\ref{cor:complexity} can be used to show that no harmonic morphism exists from certain graphs to $\Star_5$ (or any other graph that admits non-cyclic critical groups).

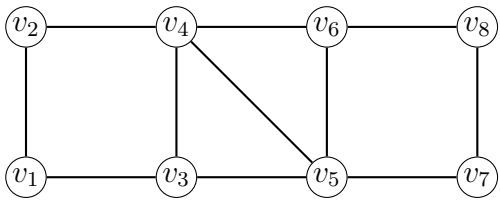
\begin{figure}[ht]
\begin{minipage}{\textwidth}
\begin{center}
\begin{tikzpicture}
\node[circle, draw,inner sep=1pt] (v1) at (0,0) {$v_1$};
\node[circle, draw,inner sep=1pt] (v2) at (0,2) {$v_2$};
\node[circle, draw,inner sep=1pt] (v3) at (2,0) {$v_3$};
\node[circle, draw,inner sep=1pt] (v4) at (2,2) {$v_4$};
\node[circle, draw,inner sep=1pt] (v5) at (4,0) {$v_5$};
\node[circle, draw,inner sep=1pt] (v6) at (4,2) {$v_6$};
\node[circle, draw,inner sep=1pt] (v7) at (6,0) {$v_7$};
\node[circle, draw,inner sep=1pt] (v8) at (6,2) {$v_8$};

 \draw [thick] (v1)--(v2) --(v4)--(v6)--(v8)--(v7)--(v5)--(v6); 
  \draw [thick] (v1)--(v3)--(v4)--(v5)--(v3);
 
\end{tikzpicture}
\end{center}
\end{minipage}
\caption{A graph with only cyclic critical groups.}
\label{fig:only_cyclic}
\end{figure}

\begin{example}
The graph in Figure \ref{fig:only_cyclic} has 
only cyclic arithmetical critical groups. 
This can be seen by considering
the Laplacian matrix associated to the arithmetical structure $(\R,\S)$ with 
$\S = (s_1, \dots , s_8)^t$,
\[
L=\left(
\begin{array}{rrrrrrrr}
 s_1 & -1 & -1 & 0 & 0 & 0 & 0 & 0 \\
 -1 & s_2 & 0 & -1 & 0 & 0 & 0 & 0 \\
 -1 & 0 & s_3 & -1 & -1 & 0 & 0 & 0 \\
 0 & -1 & -1 & s_4 & -1 & -1 & 0 & 0 \\
 0 & 0 & -1 & -1 & s_5 & -1 & -1 & 0 \\
 0 & 0 & 0 & -1 & -1 & s_6 & 0 & -1 \\
 0 & 0 & 0 & 0 & -1 & 0 & s_7 & -1 \\
 0 & 0 & 0 & 0 & 0 & -1 & -1 & s_8 \\
\end{array}
\right).
\]
Notice that the $6\times6$ submatrix one obtains by considering the first 6 rows and last 6 columns is a lower triangular matrix with determinant 1. Thus, the Smith normal form of $L$ will have at most one nonzero entry not equal to a unit. 
It follows from Corollary~\ref{cor:complexity} that this graph cannot 
map harmonically onto the star graph $\Star_5$ 
on 5 vertices. Furthermore, for a similar reason, if $\Gamma$ is any connected graph on $n$ vertices and there exists some ordering of the vertices such that for all $1\leq i\leq n-2$, $v_i\sim v_{i+2}$, and such that $v_i\not\sim v_j$ if $j>i+2$, then $\Gamma$ must only admit cyclic critical groups, and thus there can be no harmonic morphism from $\Gamma$ to $\Star_5$.

As noted, such a claim cannot be made using the natural critical group, since the natural critical group on any tree is trivial.
\end{example}

\section{Riemann-Hurwitz Formula}\label{sec:RH}


In this section, we show that there is a Riemann-Hurwitz formula for 
graphs with arithmetical structures, using the same  ramification divisor used by Baker and Norine \cite{BN09}.
We use the arithmetical analogues of  the degree of a divisor on a graph, the canonical 
divisor for a graph, and the genus of a graph in the statement and proof of the theorem.

Let $\Gamma = (V,E)$ be a connected graph of order $n$ with an 
arithmetical structure $(R,S)$. 
We define the \textit{arithmetical canonical divisor $K_\Gamma$} on $\Gamma$ to 
be the divisor given by
\[
K_\Gamma (v) = S(v) - 2
\]
at each vertex $v\in V(\Gamma).$
If we fix an ordering on 
the vertices of  $\Gamma$, we can identify $K_\Gamma$ 
with the vector $K_\Gamma = S -2 \boldsymbol 1_n$,
where $\boldsymbol 1_n$ is the $n$-vector of 1's.
Note that for the natural arithmetical structure
we have $S(v) = \deg(v)$, and thus in this case our formula 
for $K_\Gamma$ reduces to the definition used by Baker and Norine.

Let $\phi \colon \Gamma_2 \rightarrow \Gamma_1$ be 
a non-constant harmonic morphism of connected graphs, 
with fixed orderings 
on the vertices 
of $\Gamma_1$ and $\Gamma_2$, and with $n_1=|V(\Gamma_1)|$ and $n_2=|V(\Gamma_2)|$.
The {\it ramification divisor of $\phi$} is defined to be the divisor on $\Gamma_2$ 
given by
\[
\Ram_\phi(v) = 2\mu(v)-2+\nu(v),
\]
i.e., 
$\Ram_\phi =2 \boldsymbol \mu - 2 \boldsymbol 1_{n_2} + \boldsymbol \nu$,
where $\boldsymbol \mu$ is the vector of horizontal multiplicities 
and $\boldsymbol \nu$ is the vector of vertical multiplicities of 
$\phi$. 

Let $(\R_1,\S_1)$ be an arithmetical structure on $\Gamma_1$, 
and let $(\R_2,\S_2)$ be the pullback arithmetical structure on 
$\Gamma_2$.
In the next proposition, we give a relationship 
between the arithmetical canonical divisors on 
$\Gamma_1$ and $\Gamma_2$.

\begin{proposition}
\label{propn:K-Ram}
The arithmetical canonical divisors $K_{\Gamma_1}$ and 
$K_{\Gamma_2}$ satisfy
\[
K_{\Gamma_2} = \phi^* K_{\Gamma_1} + \Ram_\phi.
\]
\end{proposition}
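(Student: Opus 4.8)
The plan is to verify the identity directly in matrix form, since all three divisors appearing in the statement have clean vector expressions. Fixing the orderings on the vertices, I would write the two canonical divisors as $K_{\Gamma_2} = \S_2 - 2\boldsymbol 1_{n_2}$ and $K_{\Gamma_1} = \S_1 - 2\boldsymbol 1_{n_1}$, recall that the pullback of a divisor is $\phi^* \xi = D_\mu \Phi \xi$, and recall that $\Ram_\phi = 2\boldsymbol \mu - 2\boldsymbol 1_{n_2} + \boldsymbol \nu$. The claim then reduces to an equality between two length-$n_2$ vectors, which I can check by expanding and simplifying.

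First I would expand the pullback term:
\[
\phi^* K_{\Gamma_1} = D_\mu \Phi (\S_1 - 2\boldsymbol 1_{n_1}) = D_\mu \Phi \S_1 - 2 D_\mu \Phi \boldsymbol 1_{n_1}.
\]
The one observation that does any work is that, since each row of the vertex map matrix $\Phi$ contains exactly one $1$, we have $\Phi \boldsymbol 1_{n_1} = \boldsymbol 1_{n_2}$, and therefore $D_\mu \Phi \boldsymbol 1_{n_1} = D_\mu \boldsymbol 1_{n_2} = \boldsymbol \mu$. This gives $\phi^* K_{\Gamma_1} = D_\mu \Phi \S_1 - 2\boldsymbol \mu$. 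Adding the ramification divisor, the horizontal-multiplicity terms cancel:
\[
\phi^* K_{\Gamma_1} + \Ram_\phi = (D_\mu \Phi \S_1 - 2\boldsymbol \mu) + (2\boldsymbol \mu - 2\boldsymbol 1_{n_2} + \boldsymbol \nu) = D_\mu \Phi \S_1 + \boldsymbol \nu - 2\boldsymbol 1_{n_2}.
\]
Finally, recalling from Equation (\ref{eqn:R2S2}) that $\S_2 = D_\mu \Phi \S_1 + \boldsymbol \nu$, the right-hand side is exactly $\S_2 - 2\boldsymbol 1_{n_2} = K_{\Gamma_2}$, as required.

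There is no real obstacle here; the argument is a routine unwinding of the definitions of $\phi^*$, $\Ram_\phi$, and the pullback structure $\S_2$. The only steps worth isolating are the identity $\Phi \boldsymbol 1_{n_1} = \boldsymbol 1_{n_2}$ (which encodes that every vertex of $\Gamma_2$ has a unique image in $\Gamma_1$) and the cancellation of the term $2\boldsymbol\mu$ coming from $\phi^*(-2\boldsymbol 1_{n_1})$ against the $2\boldsymbol\mu$ built into $\Ram_\phi$. That this cancellation occurs is precisely what makes Baker and Norine's ramification divisor the correct correction term in the arithmetical setting, so I would emphasize it briefly rather than suppress it.
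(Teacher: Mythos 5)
Your argument is correct and is essentially the same as the paper's: both proofs expand $\phi^* K_{\Gamma_1} + \Ram_\phi$ in vector form using $\S_2 = D_\mu \Phi \S_1 + \boldsymbol\nu$ and the identities $\Phi \boldsymbol 1_{n_1} = \boldsymbol 1_{n_2}$ and $D_\mu \boldsymbol 1_{n_2} = \boldsymbol\mu$ to cancel the $2\boldsymbol\mu$ terms. The only difference is cosmetic ordering of the simplification steps.
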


\begin{proof}
Recall that the pullback of the arithmetical canonical divisor 
$K_{\Gamma_1}$ is given in vector form 
by 
$
\phi^* K_{\Gamma_1} = D_\mu \Phi K_{\Gamma_1},
$
where $D_\mu$ is the diagonal matrix of horizontal 
multiplicities of $\phi$, and $\Phi$ is the vertex map 
matrix of $\phi$.
We wish to show that 
$
K_{\Gamma_2} = D_\mu \Phi K_{\Gamma_1} + \Ram_\phi,
$
where $K_{\Gamma_i} = \S_i - 2 \boldsymbol 1_{n_i}$,
for $i\in\{1,2\}$.
Recall from Equation (\ref{eqn:R2S2})
that $\S_2 =D_\mu \Phi \S_1 + \boldsymbol{\nu}$.
Therefore,
\begin{align*}
D_\mu \Phi K_{\Gamma_1} + \Ram_\phi 
&= D_\mu \Phi (\S_1 - 2 \boldsymbol 1_{n_1})
+2 \boldsymbol \mu - 2 \boldsymbol 1_{n_2} + \boldsymbol \nu \\
&= \S_2 - 2 \boldsymbol 1_{n_2}
 - 2 D_\mu \Phi \boldsymbol 1_{n_1} +2 \boldsymbol \mu \\
 &= K_{\Gamma_2} 
  - 2 D_\mu \Phi \boldsymbol 1_{n_1} +2 \boldsymbol \mu .
 \end{align*}
 But $\Phi \boldsymbol 1_{n_1} = \boldsymbol 1_{n_2}$, since 
 $\Phi$ has exactly one 1 in every row, and $D_\mu 
  \boldsymbol 1_{n_2} = \boldsymbol \mu$, since $D_\mu = \Diag 
  (\boldsymbol \mu)$. 
  Therefore, $D_\mu \Phi K_{\Gamma_1} + \Ram_\phi 
  =K_{\Gamma_2} $.
\end{proof}

We define the {\it arithmetical genus $g$}
 of a connected graph $\Gamma$ 
with arithmetical structure $(\R,\S)$ by
\[
2g-2 = \deg(K_\Gamma),
\]
i.e., $2g - 2 = \sum_v \R(v) (S(v) - 2)$. 
Note that for the natural arithmetical structure, 
this is exactly what Baker and Norine called the genus of the graph, equal to  $|E(G)|-|V (G)|+1$ (also called the first Betti number).
This definition of arithmetical genus differs slightly from Lorenzini's definition of linear rank, 
which replaces $\S(v)$ by $\deg(v)$. However, the 
two definitions are equivalent by Lemma \ref{lemma:S-deg}
below.
Lorenzini shows that the linear rank (and thus the arithmetical genus $g$) is at least as large 
as the natural genus of the graph, and thus since $\Gamma$ is connected, we have $g \geq 0$ \cite[Theorem 4.7]{Lo89}.

\begin{lemma}\label{lemma:S-deg}
If $(\R,\S)$ is an arithmetical structure on $\Gamma$, then 
\[
\sum_v \R(v)\S(v) = \sum_v \R(v) \deg(v).
\]
\end{lemma}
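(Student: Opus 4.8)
The plan is to prove the identity $\sum_v R(v)S(v) = \sum_v R(v)\deg(v)$ by interpreting each side as a weighted sum over the edges of $\Gamma$, with the weighting coming from the arithmetical structure. The key observation is that the defining relation of an arithmetical structure, $S(v) = \frac{1}{R(v)}\sum_{w\sim v} R(w)$, rewrites as $R(v)S(v) = \sum_{w\sim v} R(w)$. This expresses the left-hand summand purely in terms of the values $R(w)$ at the neighbors of $v$, which is exactly the form that will let me collect contributions edge-by-edge.

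First I would substitute this relation into the left-hand side to obtain
\[
\sum_v R(v)S(v) = \sum_v \sum_{w\sim v} R(w).
\]
The inner double sum ranges over all ordered incidences $(v,w)$ with $v\sim w$. Each unordered edge $\{v,w\}$ contributes the term $R(w)$ when anchored at $v$ and the term $R(v)$ when anchored at $w$, so the total is $\sum_{\{v,w\}\in E}\bigl(R(v)+R(w)\bigr)$. I would then regroup this edge sum by vertex: the value $R(v)$ appears once for each edge incident to $v$, that is, $\deg(v)$ times. Hence
\[
\sum_{\{v,w\}\in E}\bigl(R(v)+R(w)\bigr) = \sum_v R(v)\deg(v),
\]
which is precisely the right-hand side.

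An equivalent and perhaps cleaner packaging uses the matrix language already set up in the paper. Since $LR = \boldsymbol 0$ with $L = \Diag(S) - A$, we have $\Diag(S)R = AR$, and taking the dot product with $\boldsymbol 1$ gives $\boldsymbol 1^t \Diag(S) R = \boldsymbol 1^t A R$. The left side is $\sum_v S(v)R(v)$, while on the right $\boldsymbol 1^t A$ is the row vector of degrees (since $A$ is symmetric and $A\boldsymbol 1$ records degrees), so $\boldsymbol 1^t A R = \sum_v \deg(v) R(v)$. This is the same computation phrased through the Laplacian.

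I do not anticipate a genuine obstacle here; the only point requiring any care is the double-counting step, where one must be sure that summing $R(w)$ over all neighbor-incidences correctly reassembles into $\deg(v)$ copies of each $R(v)$. Because the graph is simple with no loops, each unordered edge is counted in exactly two incidences and there are no self-incidences to worry about, so the bookkeeping is clean.
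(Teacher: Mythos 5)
Your proof is correct. Your primary argument --- substituting the defining relation $R(v)S(v) = \sum_{w\sim v} R(w)$ and then double-counting over edge incidences --- is a more elementary route than the one the paper takes: the paper works entirely in matrix form, computing $L\boldsymbol 1 = S - \boldsymbol d$ and then invoking $R^t L = \boldsymbol 0$ (symmetry of $L$ plus $LR = \boldsymbol 0$) to conclude $R^t S = R^t \boldsymbol d$. Your ``equivalent packaging'' paragraph is essentially the paper's proof, just with the two contractions of $\boldsymbol 1^t (\Diag(S)-A) R = 0$ performed in the opposite order, so you have in effect given both arguments. The combinatorial version has the advantage of making the underlying mechanism transparent (each edge $\{v,w\}$ contributes $R(v)+R(w)$ to both sides), while the matrix version is shorter and fits the notational framework the paper uses throughout Section 4. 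Your closing remark about simplicity and loop-freeness is the right point of care for the double-counting step, and the paper's standing hypothesis that $\Gamma$ is simple with no loops covers it.
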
 

\begin{proof}
We will use the facts that $L = \Diag(S) - A$
and $L\R = \boldsymbol 0$.
Let $\boldsymbol 1$ be the all 1's 
vector of size equal to the order of $\Gamma$, and let 
$\boldsymbol d$ be the vector of degrees of vertices of $\Gamma$.
Since $\Diag(S) \boldsymbol 1=\S$ and $A \boldsymbol 1 = \boldsymbol d$,
it follows that $L \boldsymbol 1 =\S - \boldsymbol d$.
Recall that $L$ is symmetric, so the identity 
$LR = \boldsymbol 0$ is equivalent to 
$R^t L = \boldsymbol 0$.
Therefore, $\R^t L\boldsymbol 1 = 0$, and
$\R^t S = \R^t \boldsymbol d.$
This completes the proof of the lemma.
\end{proof}

We now show that there is a Riemann-Hurwitz formula 
for arithmetical graphs, corresponding to Baker and Norine's 
Riemann-Hurwitz formula for graphs \cite{BN09}. 
The proof is similar to that of Baker and Norine
for graphs with the natural arithmetical structure.

\begin{theorem}\label{thm:RH}
Let $\phi \colon \Gamma_2 \rightarrow \Gamma_1$ 
be a non-constant harmonic morphism of connected graphs.
Let $(\R_1,\S_1)$ be an arithmetical structure on $\Gamma_1$,
and let $(\R_2,\S_2)$ be the pullback arithmetical structure on $\Gamma_2$
given in Equation (\ref{eqn:R2S2}).
Let $g_i$ be the arithmetical genus 
of $\Gamma_i$ for $i\in\{1,2\}$. Then
\[
2g_2 - 2 = \deg(\phi) (2g_1 - 2) 
+ \sum_v \R_2(v) \left( 2 \mu (v) - 2 + \nu (v) \right).
\]
\end{theorem}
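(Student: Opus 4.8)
The plan is to translate the geometric Riemann-Hurwitz statement into the language of the canonical divisors and degrees we have already developed, and then let the algebraic identities do the work. The key observation is that the right-hand side is almost exactly the degree of the relation in Proposition~\ref{propn:K-Ram}. Recall that the arithmetical genus is defined by $2g_i - 2 = \deg(K_{\Gamma_i})$, where the degree is taken with respect to the appropriate arithmetical structure. So the left-hand side $2g_2 - 2$ is just $\deg(K_{\Gamma_2})$ computed with respect to $(\R_2, \S_2)$, and the strategy is to compute this degree by substituting the decomposition $K_{\Gamma_2} = \phi^* K_{\Gamma_1} + \Ram_\phi$ from Proposition~\ref{propn:K-Ram}.

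First I would write $2g_2 - 2 = \deg(K_{\Gamma_2}) = \deg(\phi^* K_{\Gamma_1} + \Ram_\phi)$, and then split the degree over the sum, since degree is additive (it is just the dot product with $\R_2$). This gives $2g_2 - 2 = \deg(\phi^* K_{\Gamma_1}) + \deg(\Ram_\phi)$, where both degrees are with respect to $(\R_2, \S_2)$. For the first term, I would invoke Lemma~\ref{propn:degree-pullback}, which states precisely that $\deg(\phi^* \xi) = \deg(\phi)\deg(\xi)$ for any divisor $\xi$ on $\Gamma_1$; applying this with $\xi = K_{\Gamma_1}$ yields $\deg(\phi^* K_{\Gamma_1}) = \deg(\phi)\deg(K_{\Gamma_1}) = \deg(\phi)(2g_1 - 2)$. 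For the second term, I would simply unwind the definition: $\deg(\Ram_\phi) = \sum_v \R_2(v)\,\Ram_\phi(v) = \sum_v \R_2(v)(2\mu(v) - 2 + \nu(v))$, which is exactly the summation appearing in the theorem. Assembling the two pieces gives the claimed formula.

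I do not expect any serious obstacle here, since all the heavy lifting has already been done in Proposition~\ref{propn:K-Ram} and Lemma~\ref{propn:degree-pullback}. The one point requiring a little care is keeping track of which arithmetical structure each degree is computed against: the degree of $\Ram_\phi$ and of $K_{\Gamma_2}$ are taken with respect to $\R_2$, while $\deg(K_{\Gamma_1})$ is taken with respect to $\R_1$. This bookkeeping is automatically consistent because Lemma~\ref{propn:degree-pullback} is stated precisely as a comparison between the degree of the pullback (computed with $\R_2$) and the degree of the original divisor (computed with $\R_1$), so the change of structure is already absorbed into the factor of $\deg(\phi)$. Thus the proof is essentially a two-line substitution, and the main content of the Riemann-Hurwitz formula resides in the earlier canonical-divisor relation rather than in this final assembly.
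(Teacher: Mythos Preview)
Your proposal is correct and follows essentially the same approach as the paper's own proof: both reduce the statement to $\deg(K_{\Gamma_2}) = \deg(\phi)\deg(K_{\Gamma_1}) + \deg(\Ram_\phi)$, invoke Proposition~\ref{propn:K-Ram} for the canonical-divisor decomposition, and apply Lemma~\ref{propn:degree-pullback} to handle the pullback term. Your additional remarks about which $\R_i$ each degree is computed against are accurate and harmless elaborations on what the paper leaves implicit.
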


\begin{proof}
The identity to be proved can be restated as 
\[
\deg(K_{\Gamma_2}) = \deg(\phi) \deg (K_{\Gamma_1}) 
+ \deg(\Ram_\phi).
\]
By Proposition \ref{propn:K-Ram},
$K_{\Gamma_2} = \phi^* K_{\Gamma_1} + \Ram_\phi$.
Furthermore, $\deg( \phi^* K_{\Gamma_1}) 
=  \deg(\phi) \deg (K_{\Gamma_1})$ from Lemma~\ref{propn:degree-pullback}, so the result follows.
\end{proof}

As a corollary of the Riemann-Hurwitz theorem, 
we prove the following 
analogue of Baker and Norine's result for natural 
arithmetical structures.

\begin{corollary} \label{cor:g1g2}
Let $\phi \colon \Gamma_2 \rightarrow \Gamma_1$ 
be a non-constant harmonic morphism of connected graphs.
Let $(R_1,S_1)$ be an arithmetical structure on $\Gamma_1$, 
and let $(R_2, S_2)$ be the pullback arithmetical structure on 
$\Gamma_2$. Then $g_2 \geq g_1$.
\end{corollary}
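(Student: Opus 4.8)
The plan is to deduce Corollary \ref{cor:g1g2} directly from the Riemann--Hurwitz formula in Theorem \ref{thm:RH} by showing that the ramification term is nonnegative. Starting from
\[
2g_2 - 2 = \deg(\phi)(2g_1 - 2) + \deg(\Ram_\phi),
\]
I would first rewrite this to isolate the comparison between $g_2$ and $g_1$. Since $\deg(\phi) \geq 1$ (the morphism is non-constant), the term $\deg(\phi)(2g_1-2)$ is at least $2g_1 - 2$ whenever $2g_1 - 2 \geq 0$, i.e. whenever $g_1 \geq 1$; I will need to handle the case $g_1 = 0$ separately since there multiplying by $\deg(\phi)$ makes the right side more negative. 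The cleanest route is to prove that $\deg(\Ram_\phi) \geq 0$ and that $\deg(\phi)(2g_1 - 2) \geq 2g_1 - 2$ cannot both fail in a way that breaks the conclusion.

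The key computation is that $\deg(\Ram_\phi) \geq 0$. Recall $\Ram_\phi(v) = 2\mu(v) - 2 + \nu(v)$ and that the degree is taken with respect to $(\R_2, \S_2)$, so
\[
\deg(\Ram_\phi) = \sum_{v \in V(\Gamma_2)} \R_2(v)\bigl(2\mu(v) - 2 + \nu(v)\bigr).
\]
Each $\R_2(v)$ is a positive integer and each $\nu(v) \geq 0$, so the only possibly negative contributions come from vertices with $\mu(v) = 0$. But a non-constant harmonic morphism has $\mu(v) \geq 1$ at every vertex that is incident to a horizontal edge, and more to the point, for the conclusion $g_2 \geq g_1$ I expect to combine the two terms rather than bound the ramification term alone. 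Writing $d = \deg(\phi)$, I would rearrange to
\[
2g_2 - 2 - (2g_1 - 2) = (d-1)(2g_1 - 2) + \deg(\Ram_\phi),
\]
so that $g_2 \geq g_1$ follows provided $(d-1)(2g_1 - 2) + \deg(\Ram_\phi) \geq 0$.

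The main obstacle is handling the genus-zero case cleanly, since there $(d-1)(2g_1-2) = -2(d-1) \leq 0$ and I must show the ramification degree compensates. Here I would invoke the full Riemann--Hurwitz identity directly: since $g_2 \geq 0$ always (as noted in the text, the arithmetical genus is nonnegative for a connected graph by Lorenzini's \cite[Theorem 4.7]{Lo89}), when $g_1 = 0$ the inequality $g_2 \geq g_1 = 0$ holds trivially, and when $g_1 \geq 1$ both summands $(d-1)(2g_1-2)$ and $\deg(\Ram_\phi)$ are manifestly nonnegative. This case split is what makes the argument go through without needing a delicate sign analysis of individual ramification contributions. I would present the $g_1 = 0$ case as immediate from $g_2 \geq 0$, and the $g_1 \geq 1$ case as immediate from the two nonnegative summands, thereby covering all cases.
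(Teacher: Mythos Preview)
Your overall strategy matches the paper's: use the Riemann--Hurwitz identity, dispose of the case $g_1=0$ via the nonnegativity of the arithmetical genus, and for $g_1\ge 1$ bound the right-hand side below by $2g_1-2$. The case split is exactly what the paper does.

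The gap is your claim that $\deg(\Ram_\phi)\ge 0$ is ``manifestly nonnegative.'' It is not. You yourself noted that the only potentially negative contributions come from vertices with $\mu(v)=0$, and such vertices \emph{can occur}: a vertex all of whose incident edges are vertical has $\mu(v)=0$ and $\Ram_\phi(v)=\nu(v)-2$, which is negative when $\nu(v)=1$. (Concretely, take $\Gamma_2$ a path on four vertices mapped onto an edge $\Gamma_1$ by collapsing each endpoint pair; the two leaf vertices of $\Gamma_2$ have $\mu=0$, $\nu=1$, and ramification value $-1$.) So the nonnegativity of the \emph{total} degree $\sum_v R_2(v)(2\mu(v)-2+\nu(v))$ requires a genuine argument balancing these negative contributions against positive ones, and you do not supply one.

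This is precisely the substantive step in the paper's proof: for each $x\in V(\Gamma_1)$ one considers the subgraph $G_x$ of vertical edges over $x$, uses the first Betti number inequality $|E_x|\ge |V_x|-c_x$ to control $\sum_{v\in V_x}\nu(v)=2|E_x|$, and then uses that each connected component of $G_x$ contains at least one vertex with $\mu(v)\ge 1$ (since $\Gamma_2$ is connected and $\phi$ is non-constant). Without an argument of this kind, your $g_1\ge 1$ case is incomplete.
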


\begin{proof}
Since $R_2(v) = R_1(\phi(v))$, we can rewrite the summation term
in the Riemann-Hurwitz formula as 
\[
 \sum_v
 R_2(v)\left( 2 \mu(v) - 2 + \nu(v) \right)
=
\sum_{x \in V(\Gamma_1)} R_1(x) \sum_{v \in \phi^{-1}(x)}
\left( 2 \mu(v) - 2 + \nu(v) \right).
\]
If $v$ is a vertex in $\phi^{-1}(x)$ which is not 
incident to any vertical edge, then $\nu(v)=0$, and, 
since $\Gamma_2$ is connected, $\mu(v) \geq 1$. 
Consequently, for such $v$, $2 \mu(v) - 2 + \nu(v)  \geq 0$.

Suppose $\phi^{-1}(x)$ contains at least one 
vertical edge.
Let $G_x=(V_x,E_x)$ be the subgraph of $\Gamma_2$ 
consisting of all vertical edges in $\phi^{-1}(x)$ together 
with their incident vertices. 
The first Betti number of $G_x$ is 
$\beta_x =c_x - |V_x|+ |E_x| $, where $c_x$ is the 
number of connected components of $G_x$.
Since $\beta_x \geq 0$, we have
$|E_x| \geq -c_x+ |V_x| $.
Notice that 
$\sum_{v \in V_x} \nu(v)=2|E_x|$.
Consequently,
\begin{align*}
\sum_{v \in V_x}
\left( 2 \mu(v) - 2 + \nu(v) \right) 
&\geq 
-2c_x+2|V_x|
+ \sum_{v \in V_x} \left( 2 \mu(v)
- 2\right)\\
&= -2c_x +
\sum_{v \in V_x} 2 \mu(v).
\end{align*}
Since $\Gamma_2$ is connected, 
at least one vertex $v$ in each 
connected component of
$G_x$ must have 
horizontal multiplicity $\mu(v) \geq 1$.
Therefore,
\[
\sum_{v \in V_x}
\left( 2 \mu(v) - 2 + \nu(v) \right) 
\geq 0.
\]

Thus, it follows from Theorem \ref{thm:RH}
that 
$
2g_2 - 2 \geq \deg(\phi) (2g_1 - 2).
$
If $g_1 = 0$, then $g_2 \geq g_1$ holds trivially 
by Lorenzini's result that arithmetical genus 
(linear rank) is nonnegative. 
If $g_1 \geq 1$, then 
$
2g_2 - 2 \geq \deg(\phi) (2g_1 - 2) \geq 2g_1 - 2
$
so again $g_2 \geq g_1$.
\end{proof}


\begin{example}
Let us demonstrate the concepts in this section with the arithmetical structure from Example~\ref{ex:C3-W5-part2}. In that example, we have $R_1=(2,1,3)^t$ and $S_1=(2,5,1)^t$ on the cycle graph $\Gamma_1 = C_3$, and $R_2=(2,1,1,3,3)^t$ and $S_2=(4,6,6,2,2)^t$ on the wheel graph $\Gamma_2 = W_5$. In this case, the arithmetical cannonical divisor $K_{\Gamma_1} = (0,3,-1)^t$ has degree $R_1^tK_{\Gamma_1} =0$, and $K_{\Gamma_2}=(2,4,4,0,0)^t$ has degree $R_2^tK_{\Gamma_2} =12$. This implies that $g_1 = 1$ and $g_2=7$. The degree of the harmonic morphism 
$\phi \colon \Gamma_2\to\Gamma_1$ is 2 and its 
ramification divisor is equal to 
\[\Ram_\phi=2(2,1,1,1,1)^t-2(1,1,1,1,1)^t+(0,1,1,1,1)^t=(2,1,1,1,1)^t.\]
We can check that 
 \[
 12 = 2(0) + (2,1,1,3,3)(2,1,1,1,1)^t,
 \] thus demonstrating Theorem~\ref{thm:RH}.
\end{example}

\subsubsection*{Disclaimer} The views expressed in this article do not necessarily represent 
the views or opinions of the U.S. Naval Academy, Department of the Navy, or Department of Defense or any of its components.


\begin{thebibliography}{99}

\bibitem{ABDGGL20}
K. Archer, A.C. Bishop, A. Diaz-Lopez, L.D. Garc\'{i}a Puente, D. Glass, and J. Louwsma,
Arithmetical structures on bidents,
Discrete Math. 343(7)
(2020) 111850.
\url{http://dx.doi.org/10.1016/j.disc.2020.111850}.

\bibitem{ADGL24}
 K. Archer, A. Diaz-Lopez, D. Glass, and J. Louwsma, 
Critical groups of arithmetical structures on star graphs 
and complete graphs,
Electron. J. Comb. 31(1) (2024) \#P1.5.
\url{http://dx.doi.org/10.37236/11729}.
 
\bibitem{BdlHN97} B.R. Bacher, P. de la Harpe, 
and T. Nagnibeda, 
The lattice of integral flows
and the lattice of integral cuts 
on a finite graph, Bull. Soc. Math. France
125(2) (1997) 167--198.
\url{http://dx.doi.org/10.24033/bsmf.2303}.

\bibitem{BN09}  
M. Baker and S. Norine,
Harmonic morphisms and hyperelliptic graphs,
Int. Math. Res. Not. 15 (2009) 2914--2955.
\url{http://dx.doi.org/10.1093/imrn/rnp037}.

\bibitem{Bi99} 
N. Biggs, 
Chip-firing and the critical group of a graph, 
J. Algebr. Comb. 9 (1999) 25--45.
\url{https://doi.org/10.1023/A:1018611014097}.


\bibitem{BCCGGKMMV18} B. Braun, H. Corrales, S. Corry, 
L.D. Garc\'{i}a Puente, D. Glass, N. Kaplan, 
J.L. Martin, G. Musiker, and C.E. Valencia,
Counting arithmetical structures on paths and cycles,
Discrete Math. 341(10)
(2018) 2949--2963.
\url{http://dx.doi.org/10.1016/j.disc.2018.07.002}.

\bibitem{CV18} H. Corrales and C.E. Valencia,
Arithmetical structures on graphs,
Lin. Alg. Appl. 536
(2018) 120--151.
\url{http://dx.doi.org/10.1016/j.laa.2017.09.018}.

\bibitem{Dh90}
D. Dhar, 
Self-organized critical state of sandpile automaton models,
Phys. Rev. Lett. 64(14)
(1990) 1613--1616. 
\url{http://dx.doi.org/10.1103/PhysRevLett.64.1613}.




\bibitem{JM17} D. Joyner and  C.G. Melles, 
Adventures in Graph Theory, Birkha\"user, Basel, 2017.

\bibitem{Lo89} 
D. Lorenzini, 
Arithmetical graphs,
Math. Ann.
 285 (1989) 481--502.

\bibitem{Lo90} 
D. Lorenzini, 
Groups of components of N\'{e}ron models of Jacobians, 
Compos. Math. 73
 (1990) 145--160.

 \bibitem{Lo24} D. Lorenzini, The critical polynomial of a graph, J. Number Theory 257 (2024) 215--248. \url{https://doi.org/10.1016/j.jnt.2023.10.009}.

\bibitem{MJ24} 
C.G. Melles and D. Joyner,  
A matrix criterion for harmonic morphisms of graphs 
with applications to graph products. 
In: Heuss, S., Low, R., Wierman, J.C. (Eds.),
Combinatorics, Graph Theory and Computing. 
SEICCGTC 2022, 
Springer Proceedings in Mathematics \& Statistics, vol 462, 
Springer, Cham, 2024, pp. 243--274.
\url{https://doi.org/10.1007/978-3-031-62166-6\_19}.

\bibitem{Ur00} H. Urakawa, 
A discrete analogue of the harmonic
morphism and Green kernel comparison theorems, 
Glasg. Math. J. 42 (2000) 319--334.
\url{http://dx.doi.org/10.1017/S0017089500030019}.

\end{thebibliography}
\end{document}